\theoremstyle{plain}
\newtheorem{theorem}{Theorem}[section]
\newtheorem{corollary}[theorem]{Corollary}
\newtheorem{lemma}[theorem]{Lemma}
\newtheorem{proposition}[theorem]{Proposition}
\theoremstyle{definition}
\theoremstyle{remark}
\numberwithin{equation}{section}
\newtheorem{example}[theorem]{Example}
\newtheorem{examples}[theorem]{Examples}
\newcommand{\minimum}{\min}
\newcommand{\italic}{\emph}
\newcommand{\gradient}{\nabla}
\renewcommand{\.}{\dot}
\renewcommand{\:}{\ddot}
\newcommand{\precedes}{\prec}
\newcommand{\before}{\unlhd}
\renewcommand{\iff}{\Leftrightarrow}
\renewcommand{\implies}{\Rightarrow}
\renewcommand{\impliedby}{\Leftarrow}
\newcommand{\hs}{\hspace{1mm}}
\DeclareSymbolFont{AMSb}{U}{msb}{m}{n}
\DeclareMathSymbol{\N}{\mathbin}{AMSb}{"4E}
\DeclareMathSymbol{\Z}{\mathbin}{AMSb}{"5A}
\DeclareMathSymbol{\R}{\mathbin}{AMSb}{"52}
\DeclareMathSymbol{\Q}{\mathbin}{AMSb}{"51}
\DeclareMathSymbol{\C}{\mathbin}{AMSb}{"43}
\journal{Discrete Mathematics}
\begin{document}

\begin{frontmatter}

\title{Reduced Criteria for Degree Sequences}
\author{Jeffrey W. Miller} 
\address{Division of Applied Mathematics\\
Brown University\\
182 George St.,
Providence, RI 02912}
\ead{jeffrey\_miller@brown.edu}
\ead[url]{http://www.dam.brown.edu/people/jmiller/}

\begin{abstract}
For many types of graphs, criteria have been discovered that give necessary and sufficient conditions for an integer sequence to be the degree sequence of such a graph.  These criteria tend to take the form of a set of inequalities, and in the case of the Erd\H{o}s-Gallai criterion (for simple undirected graphs) and the Gale-Ryser criterion (for bipartite graphs), it has been shown that the number of inequalities that must be checked can be reduced significantly. We show that similar reductions hold for the corresponding criteria for many other types of graphs, including bipartite $r$-multigraphs, bipartite graphs with structural edges, directed graphs, $r$-multigraphs, and tournaments. We also prove a reduction for imbalance sequences.
\end{abstract}

\begin{keyword}
Degree sequence \sep majorization \sep adjacency matrix \sep partition 
\end{keyword}

\date{}

\end{frontmatter}


\section{Introduction}
There is a family of results that give necessary and sufficient conditions for an integer sequence to be the degree sequence of a given type of graph. A well-known example is the Erd\H{o}s-Gallai criterion \cite{Erdos_1960}: given $d\in\Z^n$ such that $d_1\geq d_2\geq\cdots\geq d_n\geq 0$ and $\sum d_i$ is even, there exists a simple undirected graph on $n$ vertices with degrees $d_1,\dotsc,d_n$ if and only if 
$$\sum_{i = 1}^k d_i\leq k (k-1)+\sum_{i = k +1}^n \min\{k,d_i\}$$
for all $k\in\{1,\dotsc,n\}$.
It is natural to ask: Are these conditions minimal? For example, must one check the inequality for all $k\in\{1,\dotsc,n\}$?  It turns out that the answer is ``no''.  It was shown by Zverovich and Zverovich \cite{Zverovich_1992}, and later by Tripathi and Vijay \cite{Tripathi_2003}, that it is sufficient that the inequality hold for $k = m$ and for all $k<m$ such that $d_k>d_{k +1}$, where $m =\max\{i: d_i\geq i\}$. As a result, it is possible to reduce the number of inequalities to the cardinality of $\{d_1,\dotsc,d_n\}$ or less. This type of reduction enjoys both theoretical and practical utility. On the theoretical side, it facilitates further results relating to degree sequences. On the practical side, savings in computation can be realized for algorithms dealing with large graphs.  

In the same spirit as the Erd\H{o}s-Gallai result, a spectrum of degree sequence criteria have been discovered for diverse classes of graphs, including bipartite graphs \cite{Gale_1957,Ryser_1957}, bipartite $r$-multigraphs \cite{Berge_1958}, bipartite graphs with structural edges \cite{Anstee_1982}, directed graphs \cite{Fulkerson_1960,Chen_1966}, $r$-multigraphs \cite{Chen_1966,Chungphaisan_1974}, tournaments \cite{Landau_1953}, and imbalance sequences of directed graphs \cite{Mubayi_2001}. Given the benefits of the reduction described above, it would be desirable to obtain analogous reductions for these other criteria as well.

The purpose of this paper is to show that such reductions can indeed be obtained for all these types of graphs. Most of these results appear to be new. For those results that are old, we provide new proofs, since our approach provides clear intuitions about why they are true. Using basic notions from finite calculus and some observations about convex sequences, we find it possible to solve these problems in a unified way, leading to proofs that are exceedingly simple and highly interpretable.

In previous work, other authors have studied the problem of degree sequence criterion reduction in three of the classes of graphs addressed in this paper. 
\begin{itemize}
\item In the case of simple undirected graphs, several researchers \cite{Eggleton_1975,Li_1975,Zverovich_1992,Tripathi_2003,Dahl_2005,Barrus_2011} have noticed that the Erd\H{o}s-Gallai criterion can be reduced. The strongest general result among these is due to Zverovich and Zverovich \cite{Zverovich_1992}, who obtained a further improvement to the reduction described above.
Zverovich and Zverovich \cite{Zverovich_1992} also proved the interesting fact that all of the Erd\H{o}s-Gallai inequalities are satisfied if the length of the sequence exceeds a certain bound that depends only on the maximum and minimum degrees. Barrus, Hartke, Jao, and West \cite{Barrus_2011} recently extended this result to sequences with bounded gaps between degrees.
Dahl and Flatberg \cite{Dahl_2005} made the insightful observation that concave sequences play an essential role in reducing the Erd\H{o}s-Gallai criterion. It turns out that this observation can be vastly extended to cover many other classes of graphs as well (when appropriately generalized to ``almost concave'' sequences).
\item In the case of bipartite graphs, the celebrated Gale-Ryser theorem \cite{Gale_1957,Ryser_1957} provides a degree sequence criterion of a similar form. Zverovich and Zverovich \cite{Zverovich_1992} also address this case, proving a reduction similar to the one described above for simple undirected graphs. In the present work, we obtain a stronger reduction, via a much simpler proof.
\item In the case of tournaments, Landau \cite{Landau_1953} provided a degree sequence criterion that can also be reduced in a similar manner, as noted (without proof) by Beineke \cite{Beineke_1989}. We prove an even stronger reduction.
\end{itemize}

The paper is organized as follows. Section \ref{section:concavity} contains definitions and elementary results. In Subsection \ref{refresher}, we state some standard definitions and facts from finite calculus. In Subsection \ref{concave}, we introduce the essential notion of an almost concave sequence, and make a few elementary observations regarding concave and almost concave sequences. Subsection \ref{Fulkerson_Ryser} demonstrates the utility of almost concavity by giving a short new proof of a well-known theorem of Fulkerson and Ryser. The main results of the paper are in Section \ref{section:reductions}, where after some general remarks we prove a series of reduced degree sequence criteria, covering many classes of graphs. Section \ref{section:reductions} ends with a negative result: a ``counterexample'' illustrating that these reductions are nontrivial in the sense that they do not hold for every class of graphs with a degree sequence criterion of the type described above.


\section{Concave and almost concave sequences}
\label{section:concavity}

It turns out that the reductions to be proved in Section \ref{section:reductions} hinge upon certain properties that can be succinctly and intuitively described using finite calculus. All of the reductions can be proven without using finite calculus, but our experience is that it yields dividends, both conceptually and notationally.  

\subsection{A brief refresher on finite calculus}
\label{refresher}

In this subsection we state some standard definitions and results from finite calculus \cite{Graham_1994}. 
Given $a\in\R^n$, define $\gradient a\in\R^n$ by 
$$(\gradient a)_k = a_k - a_{k-1}$$
for $k\in\{1,\dotsc,n\}$, where by convention $a_0 = 0$. (The sequence $\gradient a$ is sometimes referred to as the ``backward difference'', as opposed to the ``forward difference'': $(\Delta a)_k = a_{k +1} - a_k$. For our purposes, it is more notationally convenient to work with backward differences, but of course all the results below could be restated in terms of forward differences.) We often find it preferable to use the following alternative notation: define 
\begin{align*}
& \.a =\gradient a, \,\,\,\mbox{and}\\
& \:a = \gradient (\gradient a).
\end{align*}
 

The following elementary identities may be easily verified.  
 
\begin{proposition}[Basic properties] Suppose $a,b\in\R^n$, $c\in\R$, and $m\in\Z$ with $m\geq 1$. Then
\label{basics}
\begin{enumerate}
\item $\gradient(a+b) =\gradient a +\gradient b$
\item $\gradient(c a) = c(\gradient a)$
\item $a_k -a_j=\sum_{i = j +1}^k\.a_i$ whenever $1\leq j<k\leq n$.
\end{enumerate}
\end{proposition}

\subsection{Concave and almost concave sequences}
\label{concave}

In this subsection, we introduce the essential notion of an ``almost concave'' sequence, and make a few elementary observations regarding concave and almost concave sequences. A sequence $a \in\R^n$ is said to be \emph{concave} if
$$\:a_k\leq 0 \mbox{ for all $k\,$ s.t. } 3 \leq k \leq n.$$
Note that by \ref{basics}(3) this is equivalent to
$$\.a_k-\.a_j\leq 0 \mbox{ for all $j,k\,$ s.t. } 2 \leq j<k\leq n.$$  
We say that $a \in\R^n$ is \emph{almost concave} if 
$$\.a_k-\.a_j\leq 1 \mbox{ for all $j,k\,$ s.t. } 2 \leq j<k\leq n.$$  
Similarly, $a\in\R^n$ is \emph{nonincreasing} if 
$$a_k-a_j\leq 0 \mbox{ for all $j,k\,$ s.t. } 1\leq j<k\leq n,$$  
and \emph{almost nonincreasing} if 
$$a_k-a_j\leq 1 \mbox{ for all $j,k\,$ s.t. } 1\leq j<k\leq n.$$  
Given  $1\leq j<k\leq n$, we say that $a \in\R^n$ satisfies one of the preceding properties \emph{ on} $(j,\dotsc,k)$ if $(a_j,\dotsc,a_k)$ satisfies that property --- for instance, $a$ is \emph{concave on $(j,\dotsc,k)$} if $(a_j,\dotsc,a_k)$ is concave. 

\begin{examples} Let $a,b\in\R^n$ and $c,d\in\R$.
\label{examples}
\begin{enumerate}
\item $a+b$ is concave if $a$ and $b$ are concave.
\item $a + b$ is almost concave if $a$ is concave and $b$ is almost concave.
\item $c a$ is concave if $a$ is concave and $c\geq 0$.
\item $a$ is concave if $a_k =\sum_{i\leq k} b_i$ for all $k\in\{1,\dotsc,n\}$ and $b$ is nonincreasing.
\item $a$ is almost concave if $a_k =\sum_{i\leq k} b_i$ for all $k\in\{1,\dotsc,n\}$ and $b$ is almost nonincreasing.
\item $a$ is concave if $a_k = c k+ d$ for all $k\in\{1,\dotsc,n\}$.  In this case, we say that $a$ is \emph{linear}. In particular, $a$ is linear on $(j,j +1,\dotsc,l)$ if $a_k =\sum_{i\leq k} b_i$ and $b_{j +1} =\cdots = b_l$.
\item $a$ is almost concave if there exists $m\in\Z$ such that $a_k =I(k\geq m)$ for all $k\in\{1,\dotsc,n\}$, (where $I(A)$ is $1$ if $A$ is true, and is $0$ otherwise).
\end{enumerate}
\end{examples}

The following lemma is the focal point of this section.  

\begin{lemma}[Almost concave sequences] Let $a\in\Z^n$. If $a_1\geq 0$, $a_n\geq 0$, and $a$ is almost concave, then $a_k\geq 0$ for all $k\in\{1,\dotsc,n\}$.
\label{concave_lemma}
\end{lemma}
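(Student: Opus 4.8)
The plan is to argue by contradiction, leveraging the one-sided nature of the almost concave inequality together with the hypothesis that $a$ is integer-valued. The key structural point is that almost concavity bounds how much the difference sequence $\dot{a}$ can \emph{increase} as the index grows (by at most $1$), while placing no restriction on how fast it may \emph{decrease}. I expect integrality to do the essential work: a negative integer entry is at most $-1$, and a positive integer difference is at least $1$, and it is precisely this unit gap that the ``$\leq 1$'' in the definition of almost concavity cannot bridge.

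Concretely, suppose toward a contradiction that $a_p < 0$ for some index $p$; by integrality $a_p \leq -1$. Since $a_1 \geq 0$ and $a_n \geq 0$, any such $p$ must be interior, forcing $1 < p < n$ (in particular $n \geq 3$, so the small cases are handled automatically). First I would look to the right of $p$: by \ref{basics}(3), $a_n - a_p = \sum_{i = p+1}^{n} \dot{a}_i \geq 0 - (-1) = 1 > 0$, so not every term can be nonpositive, and there is an index $k$ with $p < k \leq n$ and $\dot{a}_k \geq 1$.

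Next I would feed this forced positive difference into the almost concave inequality. For every $j$ with $2 \leq j < k$ we have $\dot{a}_k - \dot{a}_j \leq 1$, hence $\dot{a}_j \geq \dot{a}_k - 1 \geq 0$. In particular this holds for all $j \in \{2,\dotsc,p\}$, since $p < k$. Summing these and applying \ref{basics}(3) once more yields $a_p - a_1 = \sum_{i=2}^{p} \dot{a}_i \geq 0$, which contradicts $a_p - a_1 \leq -1 - 0 < 0$. This completes the argument.

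The main obstacle is conceptual rather than computational: one must resist treating almost concavity as a symmetric ``bounded range'' condition on $\dot{a}$ (which it is \emph{not}) and instead exploit its directionality, pairing a single forced positive difference to the right of $p$ with the resulting nonnegativity of \emph{all} earlier differences. It is worth flagging that integrality is indispensable here --- over the reals the statement fails, since $(0,-\tfrac{1}{2},0)$ is almost concave with nonnegative endpoints yet dips below zero --- so any correct proof must, as this one does, use the unit gap between a negative and a nonnegative integer.
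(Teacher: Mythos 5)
Your proof is correct and is essentially the mirror image of the paper's argument: the paper takes the \emph{first} negative entry $a_j$, deduces $\dot{a}_j\leq -1$ from integrality, and propagates $\dot{a}_k\leq 0$ rightward via almost concavity to contradict $a_n\geq 0$, whereas you locate a difference $\dot{a}_k\geq 1$ to the \emph{right} of a negative entry and propagate $\dot{a}_j\geq 0$ leftward to contradict $a_1\geq 0$. Both arguments hinge on exactly the same two ingredients --- the unit gap forced by integrality and the one-directional nature of the almost concave inequality --- so this is the same approach run in the opposite direction.
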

\begin{proof}
Suppose not. Let $j =\min\{i: a_i<0\}$. Then $a_{j-1}\geq 0$ and $a_j<0$. Since $a$ is integer-valued, $\.a_j\leq-1$. Therefore, for any $k\in\{j+1,\dotsc,n\}$, we have $\.a_k =(\.a_k-\.a_j) +\.a_j\leq 0$ since $a$ is almost concave by assumption.  But then $a_n = a_j +\.a_{j+1} +\cdots +\.a_n<0$, a contradiction.
\end{proof}

It is easy to see that the lemma applies to concave sequences as well. The following special case is also useful.

\begin{corollary} \label{concave_corollary}
Let $a\in\Z^n$. If $a_1\geq 0$, $a_n\geq 0$, and there is some $l\in\{1,\dotsc,n\}$ such that $\:a_l\leq 1$ and $\:a_k\leq 0$ for all $k\in\{3,\dotsc,n\}$ except $l$, then $a_k\geq 0$ for all $k\in\{1,\dotsc,n\}$.
\end{corollary}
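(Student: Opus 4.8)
The plan is to deduce the corollary directly from Lemma \ref{concave_lemma} by checking that the stated hypotheses force $a$ to be almost concave. Since $a_1\geq 0$ and $a_n\geq 0$ are assumed, the only thing that needs verification is the almost concavity condition, namely $\.a_k-\.a_j\leq 1$ for all $j,k$ with $2\leq j<k\leq n$.

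First I would rewrite the first-difference gap as a telescoping sum of second differences. Applying \ref{basics}(3) to the sequence $\.a$, whose backward difference is $\gradient(\.a)=\:a$, gives $\.a_k-\.a_j=\sum_{i=j+1}^k\:a_i$. The key observation is that whenever $j\geq 2$, every summation index satisfies $i\geq j+1\geq 3$, so each term is governed by the hypothesis: $\:a_i\leq 0$ for all $i\in\{3,\dotsc,n\}$ except $i=l$, where $\:a_l\leq 1$. Therefore the sum is bounded by $1$, picking up at most a single $+1$ contribution, from $i=l$ when $l$ falls in the range $\{j+1,\dotsc,k\}$, and being $\leq 0$ otherwise. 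In either case $\.a_k-\.a_j\leq 1$, so $a$ is almost concave.

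With almost concavity established, Lemma \ref{concave_lemma} applies verbatim and yields $a_k\geq 0$ for all $k\in\{1,\dotsc,n\}$. I do not expect a genuine obstacle here: the entire content is the remark that a single exceptional second difference bounded by $1$ contributes exactly the one unit of slack that the definition of almost concave permits. The only point demanding care is the index bookkeeping --- one must use $j\geq 2$ to guarantee that all summation indices $i$ lie in $\{3,\dotsc,n\}$, which is precisely the range on which the hypothesis controlling $\:a$ is available.
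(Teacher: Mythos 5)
Your proposal is correct and follows exactly the same route as the paper's own proof: telescope $\.a_k-\.a_j$ into the sum $\:a_{j+1}+\cdots+\:a_k$, observe that at most one term (the one at index $l$) can exceed $0$ and it is bounded by $1$, conclude almost concavity, and invoke Lemma \ref{concave_lemma}. Your version merely spells out the index bookkeeping (that $j\geq 2$ puts all summation indices in $\{3,\dotsc,n\}$) which the paper leaves implicit.
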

\begin{proof}
For any $j,k$ such that $1<j<k\leq n$, we have $\.a_k-\.a_j=\:a_{j+1} +\cdots +\:a_k\leq 1$ by our assumptions, so $a$ is almost concave.
\end{proof}

\subsection{An application to a theorem of Fulkerson and Ryser}
\label{Fulkerson_Ryser}

In this subsection, we demonstrate the utility of almost concavity by giving a short new proof of a theorem of Fulkerson and Ryser. Given $a\in\N^n$ (where $\N =\{0,1,2,\dotsc\}$), let $(a_{[1]},\dotsc,a_{[n]})$ denote a permutation of $a$ such that $a_{[1]}\geq\cdots\geq a_{[n]}$. Given $a,b\in\N^n$, we say (see \cite{Marshall_1979}) that $a$ \emph{is majorized by} $b$ and write $a\precedes b$ if $\sum_{i\leq k} a_{[i]}\leq\sum_{i\leq k} b_{[i]}$ for all $k\in\{1,\dotsc,n\}$ and $\sum_{i\leq n} a_i =\sum_{i\leq n} b_i$.  We write $a\leq b$ if $a_k\leq b_k$ for all $k$. Let $e^1 =(1,0,0,\dotsc,0)$, $e^2 =(0,1,0,\dotsc,0)$, and so on. The following theorem of Fulkerson and Ryser \cite{Fulkerson_1962} is not entirely trivial, since the modified vectors may no longer be nonincreasing. The results of Subsection \ref{concave} allow for a short proof.

\begin{theorem}[Fulkerson and Ryser] Suppose $a,b\in\N^n$ are nonincreasing and $1\leq j\leq l\leq n$ such that $a_j>0$ and $b_l>0$. If $a\precedes b$, then $a-e^j\precedes b-e^l$.
\end{theorem}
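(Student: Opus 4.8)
The plan is to check the two defining conditions of $\precedes$ for the modified vectors $a-e^j$ and $b-e^l$ (both of which lie in $\N^n$ because $a_j>0$ and $b_l>0$). Equality of the total sums is immediate: each modification lowers its total by exactly $1$, and $\sum_i a_i=\sum_i b_i$ by hypothesis. Hence everything rests on the partial-sum inequalities. Writing $S_k(x)=\sum_{i\le k}x_{[i]}$ for the sum of the $k$ largest entries of $x$, I must show $S_k(a-e^j)\le S_k(b-e^l)$ for every $k$, knowing that $S_k(a)\le S_k(b)$ throughout.

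First I would compute the effect of a single decrement on these partial sums. Since $a$ is nonincreasing, let $p=\max\{i:a_i=a_j\}$ be the right end of the plateau of $a$ through index $j$; a short check gives $S_k(a-e^j)=S_k(a)-I(k\ge p)$, and similarly $S_k(b-e^l)=S_k(b)-I(k\ge q)$ with $q=\max\{i:b_i=b_l\}$. Putting $d_k=S_k(b)-S_k(a)\ge 0$, the claim becomes $d_k\ge I(k\ge q)-I(k\ge p)$ for all $k$. By \ref{examples}(7) the two correction terms are almost concave step sequences, and this is exactly the structure I intend to exploit. If $p\le q$ then $I(k\ge p)\ge I(k\ge q)$ for every $k$, so the right-hand side is $\le 0\le d_k$ and there is nothing to prove.

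The substance is the case $p>q$, where I must establish $d_k\ge 1$ on the range $q\le k<p$. Here I would apply Subsection \ref{concave} to $c_k=S_k(b-e^l)-S_k(a-e^j)=d_k+I(k\ge p)-I(k\ge q)$ on the index interval from $q-1$ to $p$ (using the convention $a_0=0$ to cover $q=1$). The endpoints are harmless: $c_{q-1}=d_{q-1}\ge 0$ and $c_p=d_p\ge 0$, since both corrections vanish at $q-1$ and both equal $1$ at $p$. Because $j\le l\le q<p$, the vector $a$ is constant on the range $[j,p]$, so $\.a_k=0$ there and $\:c_k$ collapses to $\.b_k$ plus the second differences of the two step terms. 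As $b$ is nonincreasing, $\.b_k\le 0$; the step at $p$ contributes $+1$ only at $k=p$, while the strict unit drop $b_{q+1}<b_q$ at the end of $b$'s plateau cancels the $+1$ that the other step would contribute at $k=q+1$. Thus $\:c_k\le 0$ off the single index $k=p$ and $\:c_p\le 1$, so Corollary \ref{concave_corollary} shows that $c$ is almost concave on this interval and Lemma \ref{concave_lemma} then yields $c_k\ge 0$ throughout it. Off the interval $c_k=d_k\ge 0$ already, which finishes the argument.

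I expect the main obstacle to be the bookkeeping of the hard case, specifically verifying that the two unit jumps produced by the two decrements do not reinforce each other. The decisive observation --- and the reason almost concavity, rather than concavity, is the natural tool here --- is that the forced downward step of $b$ at the end of its plateau absorbs one of these jumps, leaving a single admissible $+1$ to which Corollary \ref{concave_corollary} applies.
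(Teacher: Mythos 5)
Your proof is correct and follows essentially the same route as the paper's: both reduce to the plateau endpoints $p$ and $q$, dispose of the case $p\le q$ trivially, and establish nonnegativity of $c$ on $(q-1,\dotsc,p)$ via the almost-concavity machinery of Subsection \ref{concave}, using that $a$ is constant on $(q,\dotsc,p)$ and that the forced drop of $b$ at $q+1$ absorbs one of the unit jumps. The only difference is presentational: the paper packages $c$ as (concave) + (linear) + (almost concave) and applies Lemma \ref{concave_lemma} directly, whereas you compute the second differences explicitly and invoke Corollary \ref{concave_corollary}.
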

\begin{proof}
Suppose $a\precedes b$. Let $p =\max\{i: i\geq j, \, a_i= a_j\}$ and $q =\max\{i: i\geq l, \, b_i= b_l\}$. Note that $a-e^p$ is $a-e^j$ in nonincreasing order (and likewise for $b-e^q$ with respect to $b-e^l$), so $a-e^p\precedes b-e^q$ if and only if $a-e^j\precedes b-e^l$. For $k\in\{1,\dotsc,n\}$, let 
$$c_k = \sum_{i\leq k}(b_i-e_i^q)-\sum_{i\leq k}(a_i-e_i^p)=\sum_{i\leq k}b_i-\sum_{i\leq k}a_i-I(k\geq q) + I(k\geq p),$$
so $c\geq 0$ if and only if $a-e^p\precedes b-e^q$ if and only if $a-e^j\precedes b-e^l$. We show that $c\geq 0$. If $p\leq q$ then trivially $c\geq 0$, since $a\precedes b$. Suppose $q<p$. Note that $c_k\geq 0$ for all $k<q$ and all $k\geq p$. Thus $a_q =a_{q+1}=\cdots = a_p$, since $j\leq l\leq q<p$. Thus, on $(q-1,q,\dotsc,p)$, we have
$$c_k =\sum_{i\leq k}(b_i-e_i^q)-\sum_{i\leq k} a_i + I(k\geq p)=\mbox{(concave)+(linear)+(almost concave)},$$
by \ref{examples}(4), (6), and (7). Hence, $c$ is almost concave on $(q-1,q,\dotsc,p)$ by 2.2(1) and (2). Since $c_{q-1}\geq 0$ and $c_p\geq 0$, Lemma \ref{concave_lemma} implies that $c_k\geq 0$ for all $k\in\{q,\dotsc,p-1\}$.
\end{proof}

\section{Reduced criteria for degree sequences}
\label{section:reductions}

This section contains our main results --- a series of theorems providing reduced degree sequence criteria for many classes of graphs. We proceed in the following way.
Our first task is to introduce some order relations that play a key role.  From these relations arise the notion of generalized conjugates. Using generalized conjugates, we obtain a broad characterization of many of the results in this section.  With this in hand, we embark on a tour of degree sequence criteria for various classes of graphs, obtaining reductions for each class.  The section ends with a ``counterexample'' for which our primary reduction does not apply.

Throughout the paper, we use $\N$ to denote the nonnegative integers, $\{0,1,2,\dotsc\}$. Given $a,b\in\N^n$, we write
$$ a\leq b $$
if $a_k\leq b_k$ for all $k\in\{1,\dotsc,n\}$.  
Given $a\in\N^n$ and $b\in\N^m$, we write
$$ a\before b $$
if $\sum_{i\leq k} a_{i}\leq\sum_{i\leq k} b_{i}$ for all $k\in\{1,\dotsc,\max\{n,m\}\}$, with the convention that $a_i = 0$ for $i>n$ and $b_i = 0$ for $i>m$.  This relation differs from majorization (as in Subsection \ref{Fulkerson_Ryser}) in that we do not rearrange the elements to be nonincreasing, and we do not require the sums to be equal.  Next, we define a corresponding order relation on matrices. Let $\N^{m\times n}$ denote the set of $m\times n$ nonnegative integer matrices. Given $A,B\in\N^{m\times n}$, let us write
$$A\before B$$
if $a\before b$, where $a,b\in\N^n$ are the column sums of $A,B$, respectively.  Given a finite subset $S\subset\N^{m\times n}$ with a unique maximal element (with respect to $\before$), define the \emph{generalized conjugate of $S$} to be the column sums of the maximal element. To motivate the foregoing definition, consider:

\begin{example}[The conjugate as a generalized conjugate] \label{example:conjugate} Given $b\in\N^m$, the sequence $b'=(b'_1,b'_2,\dotsc)$ defined by $b'_k =\#\{i: b_i\geq k\}$ is called the \emph{conjugate} of $b$. (Note: We use $\# E$ to denote the number of elements in a set $E$.) If $n\geq\max b_i$ and $S\subset\{0,1\}^{m\times n}$ is the subset of binary matrices with row sums $b$, then $(b'_1,\dotsc,b'_n)$ coincides with the generalized conjugate of $S$.  When $b$ is nonincreasing, the maximal matrix is called the \emph{Ferrers diagram} of $b$.
\end{example}

The notion of a generalized conjugate seems to have developed gradually, making it difficult to credit a particular point of origin --- however, the idea is clearly present in the work of Anstee \cite{Anstee_1982} (on structured bipartite graphs), and appears fully formed in the work of Chen \cite{Chen_1992} (on structured bipartite multigraphs). Also, see \cite{Aigner_1984}.


For the generalized conjugates we deal with, explicit formulas can be given, and while these are useful for many purposes, the preceding abstract characterization often assists in understanding and proving some of their properties. For example, from the abstract characterization of $b'$ it is obvious that $\sum b_i =\sum b'_i$ and that if $b$ is nonincreasing then $b''= b$. These properties are also trivial to prove from the explicit formula, but they do not immediately suggest themselves, and this disparity becomes even more marked with more complicated general conjugates.

Now, before proceeding, let us pause to consider the general idea underlying several of the results of this section.  Many of the criteria for degree sequences are expressed as a sequence of inequalities that can be put in the form $a\before b$ with $a$ nonincreasing. (Take the Erd\H{o}s-Gallai criterion, for example.) As we will see, $a$ tends to be the degree sequence (or in bipartite cases, the degree sequence of one part), and $b$ tends to be a generalized conjugate. For $a\in\N^n$ nonincreasing, let us define the \emph{corners of $a$} to be the set of indices
$$ C(a) =\{k: a_k>a_{k +1}, \, 1\leq k\leq n\},$$
with the convention that $a_{n +1} = 0$ (so $n$ is included if $a_n>0$). Visualizing the Ferrers diagram, it is apparent that the set of corners coincides with the set $\{a'_1,a'_2,\dotsc\}-\{0\}$ where $a'$ is the standard conjugate (as in Example \ref{example:conjugate}).
The baseline reductions we obtain in Subsections \ref{section:bipartite-graphs}-\ref{section:tournaments} involve showing that it is sufficient to check the inequalities at the corners of $a$, and in most cases, further reductions are also obtained. Of course, this ``corner reduction'' does not hold in general for the relation $\before$ (for example, if $a = (2,1,1,1)$ and $b = (2,0,2,1)$, then $C(a) =\{1,4\}$ and we have $a_1\leq b_1$ and $\sum_{i = 1}^4 a_i\leq \sum_{i = 1}^4 b_i$, but $a\ntrianglelefteq b$). Still, given the generality with which it applies to degree sequence criteria, one might surmise that this reduction depends only on some fundamental property of graphs --- however, this is not so: in Subsection \ref{negative_example} we give a simple example illustrating that the corner reduction does not always apply to degree sequence criteria.

Nonetheless, the corner reduction holds in many cases, and we can capture the underlying reason for nearly all these cases via the following corollary to the lemma on almost concave sequences.  The geometric intuition is that $\sum_{i\leq k} a_i$ is a piecewise linear ``curve'' in $k$ (changing slope at the corners of $a$), and $\sum_{i\leq k} b_i$ is almost concave on each piece, so if the first ``curve'' lies below the second at the corners, then it lies below it everywhere between.

\begin{lemma} Let $a,b\in\N^n$ with $a$ nonincreasing and $b$ almost nonincreasing. If $\sum_{i\leq k} a_{i}\leq\sum_{i\leq k} b_{i}$ for all $k\in C(a)$, then $a\before b$.
\label{main_lemma}
\end{lemma}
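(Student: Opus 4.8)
The plan is to define the difference sequence $c_k = \sum_{i\le k} b_i - \sum_{i\le k} a_i$ and show $c \ge 0$ everywhere, since $a \before b$ is exactly the statement $c_k \ge 0$ for all $k$. The hypothesis gives $c_k \ge 0$ at every corner of $a$, so the task reduces to interpolating: showing $c_k \ge 0$ on each stretch between consecutive corners. This is precisely the geometric picture the author describes — $c$ is the gap between a piecewise-linear curve and an almost-concave curve, and we want to rule out the gap dipping below zero between the control points where we know it is nonnegative.

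**First** I would partition $\{1,\dots,n\}$ into the intervals delimited by the corners of $a$. If $C(a) = \{k_1 < k_2 < \cdots < k_r\}$, then on each block $(k_{t-1}, k_{t-1}+1, \dots, k_t)$ (taking $k_0$ appropriately, or just the initial segment) the sequence $a$ is constant, by the definition of a corner: $a_{k}$ does not drop strictly until we reach the next corner. I would apply Lemma \ref{concave_lemma} on each such block $(k_{t-1}, \dots, k_t)$ separately. The key structural claim is that $c$ is almost concave on each block, and that its endpoints $c_{k_{t-1}}$ and $c_{k_t}$ are nonnegative (the latter because $k_{t-1}, k_t \in C(a)$, where the hypothesis guarantees nonnegativity). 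Then Lemma \ref{concave_lemma} delivers $c_k \ge 0$ on the interior of the block, and ranging over all blocks covers $\{1,\dots,n\}$.

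**The main work** — and the step I expect to carry the argument — is verifying that $c$ is almost concave on each block where $a$ is constant. Here I would use the finite-calculus machinery: writing $c_k = \sum_{i \le k} b_i - \sum_{i \le k} a_i$ and invoking Examples \ref{examples}(4) and (5), the partial-sum sequence $\sum_{i\le k} b_i$ is almost concave (since $b$ is almost nonincreasing) and $\sum_{i \le k} a_i$ is concave (since $a$ is nonincreasing, hence concave on any subinterval where it is genuinely nonincreasing). On a block where $a$ is \emph{constant}, though, $\sum_{i\le k} a_i$ is in fact linear (Example \ref{examples}(6)), so on the block $c$ is (almost concave) minus (linear). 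Since the negative of a linear sequence is linear hence concave, and by Examples \ref{examples}(2) the sum of a concave and an almost concave sequence is almost concave, $c$ is almost concave on the block. (The constancy of $a$ on each block is exactly why we cut at the corners: if $a$ strictly decreased in the interior, subtracting it could spoil almost-concavity.)

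**The obstacle** I anticipate is purely bookkeeping at the boundaries: being careful that each interior index belongs to some block whose two endpoints are corners (or endpoints where nonnegativity is free), and handling the initial segment before the first corner and the degenerate case $a = 0$. The convention $a_{n+1} = 0$ ensures $n \in C(a)$ whenever $a_n > 0$, so the last corner reaches the end of the sequence; and $c_0 = 0 \ge 0$ handles the left boundary. Once the blockwise almost-concavity and the corner hypotheses are lined up, each application of Lemma \ref{concave_lemma} is immediate, and no genuine difficulty remains beyond this indexing.
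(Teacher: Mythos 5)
Your proof is correct and follows essentially the same route as the paper: the same difference sequence $c$, the same decomposition into blocks between consecutive corners on which $c = \mbox{(almost concave)} + \mbox{(linear)}$ is almost concave, and the same blockwise application of Lemma \ref{concave_lemma}. The one boundary case you leave implicit is the tail $k > \max C(a)$ when $a_n = 0$ (so the last corner does not reach $n$), which the paper dispatches by noting that there $\.c_k = b_k \geq 0$, so $c$ is nondecreasing past the last corner.
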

\begin{proof}
Let $c_k =\sum_{i\leq k} b_i-\sum_{i\leq k} a_i$. With the conventions that $a_0 = 0$ and $a_{n +1} = 0$, suppose $0\leq j<l\leq n$ such that $a_j\neq a_{j+1} =\cdots = a_l\neq a_{l+1}$. Then on $(j,\dotsc,l)$,
$$c_k =\sum_{i\leq k} b_i-\sum_{i\leq k} a_i=\mbox{(almost concave) + (linear)}=\mbox{(almost concave)}.$$
By assumption, $c_j\geq 0$ and $c_l\geq 0$, hence $c_k\geq 0$ for all $k\in\{j,\dotsc,l\}$ by Lemma \ref{concave_lemma}.  This takes care of $c_k\geq 0$ for all $k\in\{1,\dotsc,m\}$ where $m =\max C(a) =\max\{i: a_i>0\}$. Since by assumption $c_m\geq 0$, we have $0\leq c_m\leq c_{m+1}\leq\cdots\leq c_n$ (because $a_{m +1} =\cdots = a_n = 0$).
\end{proof}

So in each case, the corner reduction argument boils down to showing that the generalized conjugate under consideration is nonincreasing or almost nonincreasing.  Such a reduction is perhaps clear when the generalized conjugate is nonincreasing, but it is more subtle in the almost nonincreasing cases, and this may explain why those reductions have not been previously discovered. 

An interesting consequence of such a reduction is that many times it is possible to reformulate a criterion directly in terms of the standard conjugate (defined in Example \ref{example:conjugate}). In addition to facilitating further theoretical results involving degree sequences, this also has practical utility, since when dealing with large graphs with small degrees, it is desirable to represent degree sequences in a more compact form such as the conjugate or the sequence of counts $\#\{i: a_i = k\}$. Toward this end, we make note of the following relationships between $a$ and $a'$.
Let $a\in\N^n$ be nonincreasing. By visualizing the corners in the Ferrers diagram, we see that
\begin{align*}
a'_{a_k} = k &\,\mbox{ for all }\, k\in C(a),\\
a_{a'_j} = j &\,\mbox{ for all }\, j\in C(a'),
\end{align*}
and
\begin{align} \label{equation:coordinates}
\{(k,a_k): k\in C(a)\} =\{(a'_j,j): j\in C(a')\}.
\end{align}
The following observation is used several times in what follows.

\begin{proposition} \label{conjugate}
Let $a,b\in\N^n$ with $a$ nonincreasing. Let $l\in\{1,\dotsc,n\}$. The following are equivalent:
\begin{enumerate}
\item $\sum_{i\leq k} a_i\leq\sum_{i\leq k} b_i$ for all $k\in C(a)$ such that $k\leq l$
\item $j a'_j+\sum_{i>j} a'_i\leq\sum_{i\leq a'_j} b_i$ for all $j \in C(a')$ such that $a'_j\leq l$.
\end{enumerate}
\end{proposition}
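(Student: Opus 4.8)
The plan is to show that, under the corner bijection recorded in \eqref{equation:coordinates}, the two families of inequalities coincide term for term; the equivalence of (1) and (2) then follows immediately. The crux is a single identity relating a partial sum of $a$ to a tail sum of its conjugate. First I would fix $k\in C(a)$ and set $j=a_k$. By the identities preceding \eqref{equation:coordinates} (namely $a'_{a_k}=k$) together with \eqref{equation:coordinates} itself, we have $j\in C(a')$ and $a'_j=k$; conversely every $j\in C(a')$ arises this way, with $k=a'_j\in C(a)$. Thus $k\mapsto a_k$ is a bijection from $\{k\in C(a)\}$ onto $\{j\in C(a')\}$ with inverse $j\mapsto a'_j$, and since $k=a'_j$, the constraint $k\leq l$ in (1) corresponds exactly to the constraint $a'_j\leq l$ in (2).

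Next I would establish the key identity
$$\sum_{i\leq k} a_i = j\,a'_j+\sum_{i>j} a'_i \qquad\text{whenever } k\in C(a),\ j=a_k.$$
To prove it, I would count the cells in the first $k$ rows of the Ferrers diagram of $a$ column by column: column $t$ has height $a'_t$, so it contributes $\min\{a'_t,k\}$ cells to the first $k$ rows. Because $a'$ is nonincreasing and $j\in C(a')$ with $a'_j=k$, we have $a'_t\geq k$ for $t\leq j$ and $a'_t\leq k-1<k$ for $t>j$; hence the first $j$ columns each contribute $k$ cells and each later column $t$ contributes its full height $a'_t$. Summing yields $jk+\sum_{i>j} a'_i=j\,a'_j+\sum_{i>j}a'_i$, which is the left-hand side of (2). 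The right-hand sides already agree, since $k=a'_j$ makes $\sum_{i\leq k} b_i$ in (1) identical to $\sum_{i\leq a'_j} b_i$ in (2).

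Combining these observations, the inequality indexed by $k\in C(a)$ in (1) is literally the inequality indexed by $j=a_k\in C(a')$ in (2), and the two index sets (each cut off by $l$ in the matching way) are in bijection. Hence (1) holds if and only if (2) holds. The only step requiring genuine work is the Ferrers-diagram identity above; the remainder is bookkeeping with the corner correspondence \eqref{equation:coordinates}. I expect the identity — specifically, verifying carefully that exactly the first $j$ columns have height at least $k$ and that this uses $j\in C(a')$ rather than merely $a'_j=k$ — to be the one spot where a reader could slip, so that is where I would spend the most care.
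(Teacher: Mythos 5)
Your proof is correct and follows essentially the same route as the paper: both rest on the corner correspondence of Equation \eqref{equation:coordinates} together with the Ferrers-diagram identity $\sum_{i\leq k} a_i = j a'_j+\sum_{i>j}a'_i$ for $k\in C(a)$, $j=a_k$. The only difference is that you spell out the column-by-column counting that the paper dismisses as ``apparent from the Ferrers diagram,'' and your accounting there is accurate.
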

\begin{proof}
It follows from Equation \ref{equation:coordinates} that $\{(k,a_k): k\in C(a), \, k\leq l\} =\{(a'_j,j): j\in C(a'), \, a'_j\leq l\}$. Also, it is apparent from the Ferrers diagram that for any such pair $(k,j)$ (that is, $k\in C(a)$ and $j = a_k$), we have that $\sum_{i\leq k} a_i = j a'_j +\sum_{i>j} a'_i$ and (since $k = a'_j$) that $\sum_{i\leq k} b_i =\sum_{i\leq a'_j} b_i$.
\end{proof}

One last point before we embark on our tour of degree sequence criteria reductions --- to fix terminology and notation: \emph{simple} graphs have no multiple edges and no loops, \emph{multigraphs} may have multiple edges but no loops, and all graphs are undirected unless specified otherwise. We use $x\wedge y$ to denote $\min\{x,y\}$, and $x\vee y$ to denote $\max\{x,y\}$.

\subsection{Bipartite graphs}
\label{section:bipartite-graphs}

\subsubsection{Gale-Ryser criterion for bipartite graphs}

Although we will obtain it shortly as a consequence of the reduction of Berge's criterion (as well as Anstee's criterion), it is instructive to first treat the Gale-Ryser criterion separately, since in some sense it is the simplest case.  Given $a\in\N^n$ and $b\in\N^m$, let us say that $(a,b)$ is \emph{bigraphic} if there is a bipartite graph on $n+m$ vertices with degree sequences $a, b$ in each part respectively. Given $a\in\N^n$ and $b\in\N^m$ with $a$ nonincreasing and $\sum a_i =\sum b_i$, Gale \cite{Gale_1957} and Ryser \cite{Ryser_1957} proved that $(a,b)$ is bigraphic if and only if 
\begin{equation}\tag{GR$_k$}
\sum_{i\leq k} a_i\leq\sum_{i\leq k} b'_i
\end{equation}
for all $k\in\{1,\dotsc,n\}$, that is, if and only if $a\before b'$. Thus, in this case, the appropriate generalized conjugate is simply the standard conjugate (as in Example \ref{example:conjugate}). (See \cite{Krause_1996} for a splendid proof of the Gale-Ryser theorem due to Krause.) From this we obtain:

\begin{theorem}[Reduced Gale-Ryser]\label{Gale_Ryser} Let $a\in\N^n$ and $b\in\N^m$ with $a$ nonincreasing and $\sum_{i\leq n} a_i =\sum_{i\leq m} b_i$. The following are equivalent:
\begin{enumerate}
\item $(a,b)$ is bigraphic
\item $a\before b'$
\item \textup{(GR$_k$)} for all $k\in\{1,\dotsc,n\}$
\item \textup{(GR$_k$)} for all $k\in C(a)$ such that $k<\max b_i$
\item $j a'_j+\sum_{i>j} a'_i\leq\sum_{i\leq a'_j} b'_i$ for all $j \in C(a')$ such that $a'_j<\max b_i$.
\end{enumerate}
\end{theorem}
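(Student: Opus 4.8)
The plan is to take condition (3) as a hub and prove the remaining statements equivalent to it. The equivalence of (1) and (3) is precisely the Gale--Ryser theorem quoted above, which I would simply cite. For (2) $\iff$ (3) I would unwind the definition of $\before$: because $\sum_{i\leq n}a_i=\sum_{i\leq m}b_i=\sum_i b'_i$, the family (GR$_k$) for $k\in\{1,\dots,n\}$ is exactly the assertion $a\before b'$. The one point needing attention is the range of the index, since $a\before b'$ nominally ranges over $k\leq\max\{n,\max b_i\}$; but (GR$_n$) together with the sum equality forces $\sum_{i\leq n}b'_i=\sum_i b'_i$, hence $b'_i=0$ for $i>n$ and $\max b_i\leq n$, so the two ranges agree whenever either condition can hold.

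The substance of the theorem is the corner reduction (3) $\iff$ (4). The direction (3) $\implies$ (4) is immediate, since (4) requests a subfamily of the inequalities in (3). For the converse I would apply Lemma \ref{main_lemma} to $a$ and $b'$, which requires two things. First, $b'$ must be almost nonincreasing; in fact $b'_k=\#\{i:b_i\geq k\}$ is nonincreasing in $k$, so this holds outright, placing us in the ``clear'' (genuinely nonincreasing) case flagged in the discussion after Lemma \ref{main_lemma}. Second, Lemma \ref{main_lemma} needs (GR$_k$) at \emph{every} corner $k\in C(a)$, whereas (4) supplies it only for corners with $k<\max b_i$. The remaining corners, those with $k\geq\max b_i$, cost nothing: there $\sum_{i\leq k}b'_i=\sum_i b'_i=\sum_i a_i\geq\sum_{i\leq k}a_i$, so (GR$_k$) follows from the sum condition alone. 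Hence (4) delivers the corner inequalities throughout $C(a)$, and Lemma \ref{main_lemma} upgrades them to $a\before b'$, that is, to (2), which we have already matched with (3).

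Finally, (4) $\iff$ (5) is an application of Proposition \ref{conjugate} with its sequence ``$b$'' replaced by $b'$ and its cutoff taken to be $l=\max b_i-1$, so that the side conditions $k\leq l$ and $a'_j\leq l$ become $k<\max b_i$ and $a'_j<\max b_i$. Proposition \ref{conjugate} then reads off (5), indexed by the corners $C(a')$ of the conjugate, from (4), indexed by the corners $C(a)$, closing the cycle. (In the degenerate ranges $\max b_i\leq 1$ and $\max b_i>n+1$, where this cutoff would fall outside $\{1,\dots,n\}$, the side conditions select respectively no corners or all corners, so Proposition \ref{conjugate} applies with $l$ truncated into $\{1,\dots,n\}$.)

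I expect the difficulty to be bookkeeping rather than genuine depth. Because $b'$ is honestly nonincreasing, the delicate ``almost concave'' phenomenon never intervenes, and the real care lies in reconciling the three index ranges --- $k\leq n$ in (3), corners $k<\max b_i$ in (4), and $a'_j<\max b_i$ in (5) --- and in confirming both that corners past $\max b_i$ are automatically satisfied and that (4) itself forces $\max b_i\leq n$ (for if the largest corner $m=\max C(a)$ satisfied $m<\max b_i$, the corresponding inequality would clash with $\sum_i b'_i=\sum_i a_i$), so that $a$ and $b'$ may legitimately be compared by Lemma \ref{main_lemma} as sequences of equal length. Each of these is exactly where the hypothesis $\sum a_i=\sum b_i$ earns its keep.
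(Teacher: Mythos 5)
Your proposal is correct and follows essentially the same route as the paper: cite Gale--Ryser for (1)$\iff$(3), reduce (3)$\impliedby$(4) to Lemma \ref{main_lemma} using that $b'$ is nonincreasing, and get (4)$\iff$(5) from Proposition \ref{conjugate}. The only cosmetic difference is that the paper applies Lemma \ref{main_lemma} to the sequences truncated at $l=\max b_i$ and handles $k>l$ by monotonicity of $c$, whereas you apply it to the full length-$n$ sequences after observing that the corners with $k\geq\max b_i$ hold automatically from $\sum a_i=\sum b'_i$; both hinge on the same observation (which you correctly flag) that (4) forces $\max b_i\leq n$.
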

\begin{proof} 
(1)$\iff$(3): Gale-Ryser. (2)$\iff$(3): Trivial. (3)$\implies$(4): Trivial. (3)$\impliedby$(4): Let $c_k =\sum_{i\leq k}(b'_i-a_i)$. Let $l =\max b_i$, noting that $\max b_i\leq n$ (since otherwise (GR$_{\max C(a)}$) would be violated). For $k\in\{l+1,\dotsc,n\}$ we have $\.c_k =-a_k\leq 0$ since $b'_k = 0$. Hence, $c_l\geq\cdots\geq c_n = 0$. To show that $c_1,\dotsc,c_{l-1}\geq 0$, the idea is to use the fact that $c$ is concave wherever $a$ is constant. Formally: since $b'$ is nonincreasing, we can apply Lemma \ref{main_lemma} to $(a_1,\dotsc,a_l)$ and $(b'_1,\dotsc,b'_l)$ to see that $c_1,\dotsc,c_l\geq 0$.
(4)$\iff$(5): Proposition \ref{conjugate}.
\end{proof} 

Using a set of inequalities that are easily shown to be equivalent to (GR$_k$), Zverovich and Zverovich \cite{Zverovich_1992} gave a rather intricate proof that it suffices to check the inequalities for $k\in C(a)$ --- however, this is a slightly weaker result than (1)$\iff$(4), and our proof is considerably simpler.

It is worth pointing out that the set of corners that must be checked can sometimes be reduced even further, by using the following fact: If $k_1<k_2<k_3$ are three consecutive corners such that $a_{k_1} = a_{k_2}+1 = a_{k_3}+2$ and both (GR$_{k_1}$) and (GR$_{k_3}$) hold, then (GR$_{k_2}$) holds as well. This can be proved using Lemma \ref{concave_lemma} and the fact that $c$ is almost concave on $(k_1,\dotsc,k_3)$.

In fact, it would seem that one could continue reducing the set of indices that need to be checked, at the expense of increasingly complicated descriptions of this set.
What would be more interesting, instead, would be salient special cases for which significant further reductions can be obtained. Here are some examples of the latter.

It is helpful to introduce the following terminology: given $a\in\N^n$ and $t\in\N$ such that $t\geq 1$, let us say that
\begin{itemize}
\item $a$ is \emph{$t$-dense} if every interval $(k,k +1,\dotsc,k+t-1)$ of length $t$ contained in $(\min a_i,\dotsc,\max a_i)$ contains an element of $a$ (that is, for each $k\in\{\minimum a_i,\dotsc,\max a_i-t +1\}$ there exists $i$ such that $k\leq a_i\leq k+t-1$)
\item $a$ is \emph{$t$-deep} if $a$ contains $t$ or more copies of each element of $\{\min a_i,\dotsc,$ $\max a_i-1\}$ (that is, for each $k\in\{\minimum a_i,\dotsc,\max a_i-1\}$ we have $\#\{i: a_i = k\}\geq t$).
\end{itemize}
(Some other authors \cite{Barrus_2011} have used the term ``gap-free'' instead of ``$1$-dense''.) 
When $a\in\N^n$ is nonincreasing and $t\in\N$ such that $t\geq 1$, by visualizing the Ferrers diagram it is clear that
\begin{itemize}
\item $a$ is $t$-dense if and only if $\. a_k\geq-t$ for all $k\in\{1,\dotsc,n\}$
\item $a$ is $t$-deep if and only if $\. a'_k\leq-t$ for all $k\in\{\minimum a_i +1,\dotsc,\max a_i\}$ (where $\. a'=\gradient(a')$), assuming that $\minimum a_i\geq 1$
\item $a$ is $1$-dense if and only if $a$ is $1$-deep.
\end{itemize}
The proof of the following is a nice example of the clarity afforded by the finite calculus perspective.

\begin{proposition} 
Let $a\in\N^n$ and $b\in\N^m$ with $\sum_{i\leq n} a_i =\sum_{i\leq m} b_i$, $1\leq a\leq m$, and $1\leq b\leq n$. Then $(a,b)$ is bigraphic if any of the following hold:
\begin{enumerate}
\item $a$ is $t$-dense and $b$ is $t$-deep, for some $t\in\N$, $t\geq 1$.
\item $a$ and $b$ are $1$-dense.
\item $a =(p,\dotsc,p)$ and $b =(q,\dotsc,q)$ for some $p,q\in\N$.
\end{enumerate}
\end{proposition}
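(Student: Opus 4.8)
The plan is to funnel all three cases through the Reduced Gale-Ryser theorem (Theorem~\ref{Gale_Ryser}) and then exploit the matching of the parameter $t$ in ``$t$-dense'' and ``$t$-deep''. Since being bigraphic, $t$-dense, and $t$-deep are each invariant under permuting the entries of $a$ or of $b$, I may assume $a$ and $b$ nonincreasing. By Theorem~\ref{Gale_Ryser} it then suffices to prove $a\before b'$, i.e.\ that $c_k:=\sum_{i\leq k}(b'_i-a_i)\geq 0$ for all $k$. Here $c_0=0$, and $c_n=0$ because $\sum a_i=\sum b_i=\sum b'_i$.

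Next I would observe that (2) and (3) are special cases of (1). In (2), the equivalence ``$1$-dense $\iff$ $1$-deep'' turns the $1$-dense sequence $b$ into a $1$-deep sequence, so (1) applies with $t=1$; in (3), the constant sequences $a$ and $b$ are each $1$-dense (a single value is vacuously gap-free), which is (2). Thus everything reduces to case (1).

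For (1) the key is to track the sign of the increment $\.c_k=d_k$, where $d=b'-a$, across the three regions cut out by $\beta_{\min}=\min b_i$ and $\beta_{\max}=\max b_i$; since $b\geq 1$ the conjugate is flat, $b'_k=m$, for $k\leq\beta_{\min}$ and vanishes for $k>\beta_{\max}$. On $(1,\dotsc,\beta_{\min})$ we have $\.b'_k=0$ for $k\geq 2$ and $\.a_k\leq 0$, hence $\.d_k\geq 0$; combined with $d_1=m-a_1\geq 0$ (using $a\leq m$) this makes $d$ nonnegative there. On $(\beta_{\min},\dotsc,\beta_{\max})$, $t$-depth gives $\.b'_k\leq -t$ and $t$-density gives $\.a_k\geq -t$, so $\.d_k=\.b'_k-\.a_k\leq 0$ and $d$ is nonincreasing. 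On $(\beta_{\max}+1,\dotsc,n)$ we have $b'_k=0$, so $d_k=-a_k<0$ (using $a\geq 1$).

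Assembling these, $d$ starts nonnegative, is nonincreasing from a nonnegative value through the middle, and is strictly negative at the end, so it changes sign at most once, from nonnegative to nonpositive. Consequently $c$ is nondecreasing and then nonincreasing, whence for each $k$ either $c_k\geq c_0=0$ or $c_k\geq c_n=0$; in every case $c_k\geq 0$, which is exactly $a\before b'$. I expect the middle region to be the crux: it is precisely the cancellation of $\.b'_k\leq -t$ against $\.a_k\geq -t$ --- the reason the same $t$ governs both hypotheses --- that pins $d$ to be nonincreasing there, and the boundary bounds $a\leq m$ and $a\geq 1$ are what prevent a spurious second sign change where $b'$ is flat or zero.
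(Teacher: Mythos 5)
Your proof is correct and follows essentially the same route as the paper: reduce (3) to (2) to (1), pass everything through the Gale--Ryser criterion, and control $c_k=\sum_{i\leq k}(b'_i-a_i)$ on the three regions cut out by $\min b_i$ and $\max b_i$, with the $t$-dense/$t$-deep cancellation $\.b'_k-\.a_k\leq -t+t=0$ doing the work in the middle. The only cosmetic difference is that where the paper invokes Lemma~\ref{concave_lemma} on the concave middle piece, you inline the same sign-change argument directly on $\.c$.
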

\begin{proof}
(1)
We may assume $a$ to be nonincreasing, since it affects neither the assumptions nor the conclusion.
Let $c_k =\sum_{i\leq k} (b'_i-a_i)$ for $k\in\{1,\dotsc,n\}$. Then $\.c_k = b'_k-a_k$, and
\begin{enumerate}
\item[(i)] $\.c_k\geq 0$ for $1\leq k\leq\minimum b_i$ (since $k\leq\minimum b_i$ implies $b'_k = m\geq a_k$)
\item[(ii)] $\.c_k\leq 0$ for $\max b_i<k\leq n$ (since $k>\max b_i$ implies $b'_k = 0$)
\item[(iii)] $\:c_k=\.b'_k-\.a_k\leq-t + t = 0$ for $\minimum b_i<k\leq \max b_i$ (since $a$ is $t$-dense and $b$ is $t$-deep).
\end{enumerate}
By (i), $0\leq c_1\leq\cdots\leq c_{\minimum b_i}$. By (ii), $c_{\max b_i}\geq\cdots\geq c_n=0$, where $c_n = 0$ since $\sum_{i\leq n} a_i =\sum_{i\leq m} b_i$ and $\max b_i\leq n$ by assumption. Thus, by (iii) and Lemma \ref{concave_lemma}, $c_k\geq 0$ for $\minimum b_i<k<\max b_i$. Hence, $c\geq 0$, so $a\before b'$, and we can apply the Gale-Ryser theorem.

(2) Apply (1) and the fact that $1$-dense implies $1$-deep.

(3) Apply (2).
\end{proof}

It is also amusing to note that $b'_{\max b_i}\geq \max a_i$ is a sufficient condition (when $a\in\N^n$ and $b\in\N^m$ with $\sum a_i =\sum b_i$).

\subsubsection{Berge's criterion for bipartite $r$-multigraphs}

Next, we generalize from bipartite graphs to bipartite multigraphs.
By a \emph{bipartite $r$-multigraph} we mean a bipartite graph allowing multiple edges but with no more than $r$ edges connecting each pair of vertices. Let $a\in\N^n$ and $b\in\N^m$ with $a$ nonincreasing and $\sum a_i =\sum b_i$. Let $r\in\N$ with $r\geq 1$, and let $S\subset\N^{m\times n}$ be the subset of matrices with row sums $b$ and all entries less or equal to $r$. Denoting the corresponding generalized conjugate by $b^B$, it is easy to verify that $b^B$ exists as long as $\max b_i\leq r n$, that $b^B$ is nonincreasing, and that
$$\sum_{i\leq k} b^B_i =\sum_{i\leq m} (r k)\wedge b_i=\sum_{i\leq r k} b'_i. $$
Berge \cite{Berge_1958} proved the following criterion in the case of $m = n$, and it holds also for $m\neq n$ (as noted in \cite{Marshall_1979}, p.185): there exists a bipartite $r$-multigraph with degrees $(a,b)$ if and only if 
\begin{equation}\tag{B$_k$}
\sum_{i\leq k} a_i\leq\sum_{i\leq m} (r k)\wedge b_i
\end{equation}
for all $k\in\{1,\dotsc,n\}$. This is easily proved by a direct generalization of Krause's proof \cite{Krause_1996} of the $r = 1$ case (that is, the Gale-Ryser theorem).
We obtain:

\begin{theorem}[Reduced Berge] Let $a\in\N^n$ and $b\in\N^m$ with $a$ nonincreasing, $\max b_i\leq rn$, and $\sum a_i =\sum b_i$. Let $r\in\N$ with $r\geq 1$. The following are equivalent:
\begin{enumerate}
\item there exists a bipartite $r$-multigraph with degrees $(a,b)$
\item $a\before b^B$
\item \textup{(B$_k$)} for all $k\in\{1,\dotsc,n\}$
\item \textup{(B$_k$)} for all $k\in C(a)$ such that $k<\max b_i/r$
\item $j a'_j+\sum_{i>j} a'_i\leq\sum_{i\leq r a'_j} b'_i$ for all $j \in C(a')$ such that $a'_j<\max b_i/r$.
\end{enumerate}
\end{theorem}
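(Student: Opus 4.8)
The plan is to run the same cycle of equivalences used for the Reduced Gale--Ryser theorem, exploiting that the generalized conjugate $b^B$ here is nonincreasing and satisfies $\sum_{i\le k} b^B_i=\sum_{i\le m}(rk)\wedge b_i$, so that it plays exactly the role that $b'$ played there. Four of the links are immediate: (1)$\iff$(3) is Berge's criterion as quoted; (2)$\iff$(3) is the displayed identity just recalled, which makes $a\before b^B$ literally the conjunction of the (B$_k$) for $k\in\{1,\dots,n\}$; and (3)$\implies$(4) is a restriction. So only (4)$\implies$(3) and (4)$\iff$(5) carry content.

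For (4)$\implies$(3) I would set $c_k=\sum_{i\le k}(b^B_i-a_i)$, so that $\dot c_k=b^B_k-a_k$ and the goal is $c\ge 0$. The first step is to locate where $b^B$ vanishes. Writing $b^B_k=b'_{r(k-1)+1}+\cdots+b'_{rk}$, this sum is positive exactly when its leading term is, i.e.\ when $r(k-1)<\max b_i$, which for integer $k$ means $k\le l:=\lceil \max b_i/r\rceil$; the hypothesis $\max b_i\le rn$ then gives $l\le n$. On the tail $\{l+1,\dots,n\}$ we have $\dot c_k=-a_k\le 0$, so $c_l\ge\cdots\ge c_n=0$, the last equality coming from $\sum b^B_i=\sum b_i=\sum a_i$. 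The second step is to apply Lemma \ref{main_lemma} to the truncations $(a_1,\dots,a_l)$ and $(b^B_1,\dots,b^B_l)$, the latter being nonincreasing: at every corner of $(a_1,\dots,a_l)$ the inequality $\sum_{i\le k}a_i\le\sum_{i\le k}b^B_i$ holds---at corners $k<l$ by hypothesis (4), and at the top corner $k=l$ because $c_l\ge 0$ was just obtained from the monotone tail. The lemma yields $c_1,\dots,c_l\ge 0$, which together with the tail gives $c\ge 0$, hence (3).

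I expect the one delicate point to be aligning the cutoff in (4), namely $k<\max b_i/r$, with the truncation length $l=\lceil \max b_i/r\rceil$. The key observation is that for integer $k$ one has $k<\max b_i/r$ if and only if $k<\lceil\max b_i/r\rceil=l$; consequently the corners tested by (4) are precisely the corners of $(a_1,\dots,a_l)$ lying strictly below $l$, and the single remaining corner at $l$ is furnished for free by the monotone tail. This is exactly the device that lets the top corner of the Ferrers diagram be omitted, just as in the Gale--Ryser argument, and it is the only place where the precise form of the bound in (4) enters.

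Finally, (4)$\iff$(5) is Proposition \ref{conjugate} applied with its sequence ``$b$'' taken to be $b^B$ and its index chosen as $\lceil \max b_i/r\rceil-1$, so that the range ``$k\le l$'' there matches ``$k<\max b_i/r$'' and ``$a'_j\le l$'' matches ``$a'_j<\max b_i/r$'' (again using that these are integers). The proposition rewrites (4) as $j a'_j+\sum_{i>j}a'_i\le\sum_{i\le a'_j}b^B_i$ over the relevant $j\in C(a')$, and substituting $\sum_{i\le a'_j}b^B_i=\sum_{i\le r a'_j}b'_i$ produces (5) verbatim.
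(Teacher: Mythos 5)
Your proposal is correct and follows essentially the same route as the paper: handle the tail $k\geq\max b_i/r$ where $b^B$ vanishes (so that $c$ is nonincreasing down to $c_n=0$ there), then apply Lemma \ref{main_lemma} to the truncated sequences using that $b^B$ is nonincreasing, and invoke Proposition \ref{conjugate} for (4)$\iff$(5). Your treatment merely spells out the ceiling-function bookkeeping that the paper leaves implicit.
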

\begin{proof} 
(1)$\iff$(3): Berge. (2)$\iff$(3): Trivial. (3)$\implies$(4): Trivial. (3)$\impliedby$(4): Since $b^B_k = 0$ for all $k\geq 1+\max b_i/r$, it follows that (B$_k$) holds for all $k\geq\max b_i/r$ (as in the proof of Theorem \ref{Gale_Ryser}).  Apply Lemma \ref{main_lemma} using that $b^B$ is nonincreasing. 
(4)$\iff$(5):  Proposition \ref{conjugate}.
\end{proof} 

As before, the set of corners that must be checked can be reduced even further in certain cases.

\subsection{Directed graphs and structured bipartite graphs}

The results of this subsection are more interesting, since they necessitate the use of almost concavity, as opposed to the more straightforward concave examples so far.

\subsubsection{Anstee's criterion for structured bipartite graphs}

By structured bipartite graphs, we mean bipartite graphs with certain edges required to be present or absent, in a way that will be made precise below. It is more clear to formulate this example in terms of matrices (rather than graphs), with the obvious corollaries for graphs. Fix $b\in\N^m$ and $C\in\{0,1\}^{m\times n}$. We say that $C$ is \emph{$b$-fillable} if its row sums do not exceed $b$ and it has no more than one nonzero entry per column --- that is,
$$ \sum_j C_{ij}\leq b_i \mbox{ for all $i$, and }\sum_i C_{ij}\leq 1 \mbox{ for all $j$}.$$
Given $A\in\{0,1\}^{m\times n}$, we say that \emph{$A$ fills $C$} if $A_{i j} = 1$ whenever $C_{i j} = 1$. (Sometimes, the set of such matrices is said to have ``structural ones'' in these entries.)
Given $C$ $b$-fillable, let $S_{1,C}\subset\{0,1\}^{m\times n}$ be the subset of binary matrices $A$ having row sums $b$ and filling $C$. Define $b^{1,C}$ to be the corresponding generalized conjugate, assuming $\max b_i\leq n$. (It should be clear that $S_{1,C}$ is nonempty and has a unique maximal element as long as $\max b_i\leq n$.) For our purposes it is preferable to work with the abstract definition, but it can be verified that
$$\sum_{i\leq k} b^{1,C}_i =\sum_{i\leq m} k\wedge(b_i-C_{i,k+1}-\cdots-C_{i n}). $$

Similar definitions can be made for ``structural zeros''. We say that $C$ is \emph{$b$-avoidable} if its row sums do not exceed $n-b$ and it has no more than one nonzero entry per column --- that is,
$$\sum_j C_{ij}\leq (n-b_i) \mbox{ for all $i$, and } \sum_i C_{ij}\leq 1 \mbox{ for all $j$}.$$
Given $A\in\{0,1\}^{m\times n}$, we say that \emph{$A$ avoids $C$} if $A_{i j} = 0$ whenever $C_{i j} = 1$. 
Given $C$ $b$-avoidable, let $S_{0,C}\subset\{0,1\}^{m\times n}$ be the subset of binary matrices $A$ having row sums $b$ and avoiding $C$. Define $b^{0,C}$ to be the corresponding generalized conjugate.  It can be verified that
$$\sum_{i\leq k} b^{0,C}_i =\sum_{i\leq m} (k-C_{i1}-\cdots-C_{ik})\wedge b_i. $$

Anstee \cite{Anstee_1982} has given a criterion that appears as the implications (1)$\iff$(2) in the following theorem. Anstee's results can be easily proven with a minor modification of Krause's proof \cite{Krause_1996} of Gale-Ryser. For $\beta\in\{0,1\}$ and $a\in\N^n$, consider the inequalities:
\begin{equation}\tag{A($\beta$)$_k$}
\sum_{i\leq k} a_i\leq\sum_{i\leq k} b^{\beta,C}_i.
\end{equation}

\begin{theorem}[Reduced Anstee] \label{Anstee}
Let $a\in\N^n$ and $b\in\N^m$ with $a$ nonincreasing, $\max b_i\leq n$, and $\sum a_i =\sum b_i$. Let $C\in\{0,1\}^{m\times n}$. 
If $C$ is $b$-fillable, then the following are equivalent:
\begin{enumerate}
\item there exists a binary matrix that fills $C$ and has column and row sums $a,b$ respectively
\item $a\before b^{1,C}$
\item \textup{(A(1)$_k$)} for all $k\in\{1,\dotsc,n\}$
\item \textup{(A(1)$_k$)} for all $k\in C(a)$
\end{enumerate}
If $C$ is $b$-avoidable, then the same results hold, replacing ``fills'' with ``avoids'', $b^{1,C}$ with $b^{0,C}$, and $A(1)_k$ with $A(0)_k$.
\end{theorem}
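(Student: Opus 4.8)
The plan is to establish the cycle of implications exactly as in Theorem \ref{Gale_Ryser} and the Reduced Berge theorem. The equivalence (1)$\iff$(2) is Anstee's criterion, (2)$\iff$(3) is immediate from the definition of $\before$ together with the stated formula $\sum_{i\le k} b^{1,C}_i=\sum_{i\le m} k\wedge(b_i-C_{i,k+1}-\cdots-C_{in})$, and (3)$\implies$(4) is trivial since $C(a)\subseteq\{1,\dotsc,n\}$. Thus the only substantive implication is (4)$\implies$(3). For this I would invoke Lemma \ref{main_lemma} with the role of $b$ there played by $b^{1,C}$: the hypotheses (A(1)$_k$) for $k\in C(a)$ are precisely $\sum_{i\le k}a_i\le\sum_{i\le k}b^{1,C}_i$ for $k\in C(a)$, and $a$ is nonincreasing by assumption, so once I know that $b^{1,C}$ is almost nonincreasing, Lemma \ref{main_lemma} yields $a\before b^{1,C}$, which is (2), hence (3). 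Everything therefore reduces to the single claim that the generalized conjugate $b^{1,C}$ is almost nonincreasing.

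Proving that claim is the main obstacle, and it is exactly where $b$-fillability is used. The clean way to see it is through the maximal matrix itself. Let $A^*$ be the \emph{left-packed} fill: in each row $i$ place the forced structural ones (at the positions with $C_{ij}=1$) together with the remaining $b_i-\sum_j C_{ij}$ free ones in the leftmost non-structural columns. This is feasible because $\sum_j C_{ij}\le b_i\le n$, and a short telescoping check shows that its partial column sums equal $\sum_{i\le m} k\wedge(b_i-C_{i,k+1}-\cdots-C_{in})$, so its column sums are $b^{1,C}$ and $b^{1,C}_k=\sum_i A^*_{ik}$. The key observation is monotonicity of the free ones within a row: if $A^*_{ik}=1$ comes from a free one, then every earlier column of row $i$ (structural or free) is a one as well, so $A^*_{ij}=1$ for all $j<k$. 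Hence, for fixed $j<k$, a row can have $A^*_{ik}=1$ while $A^*_{ij}=0$ only if the one in column $k$ is structural, i.e. $C_{ik}=1$. Since $b$-fillability forces $\sum_i C_{ik}\le 1$, there is at most one such row, so $\#\{i:A^*_{ik}=1,\ A^*_{ij}=0\}\le 1$. Therefore
$$b^{1,C}_k-b^{1,C}_j=\#\{i:A^*_{ik}=1,\ A^*_{ij}=0\}-\#\{i:A^*_{ij}=1,\ A^*_{ik}=0\}\le 1$$
for every $j<k$, which is exactly the statement that $b^{1,C}$ is almost nonincreasing.

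For the $b$-avoidable case I would run the identical argument with $A^*$ now the left-packed \emph{avoid} (zeros forced at the structural positions, and $b_i$ ones placed in the leftmost allowed columns); its column sums match the stated formula for $b^{0,C}$. Here a row can have $A^*_{ik}=1$ while $A^*_{ij}=0$ for $j<k$ only if column $j$ is a forced zero, i.e. $C_{ij}=1$, and again $b$-avoidability gives $\sum_i C_{ij}\le 1$, so $\#\{i:A^*_{ik}=1,\ A^*_{ij}=0\}\le 1$ and $b^{0,C}$ is almost nonincreasing. Lemma \ref{main_lemma} then delivers (4)$\implies$(3) just as before. (Alternatively, the avoidable case follows from the fillable one by complementation $A\mapsto J-A$, which turns an avoider of $C$ with row sums $b$ into a filler of $C$ with row sums $n\mathbf{1}-b$; but the direct argument is shorter, and it makes transparent that the one-nonzero-per-column hypothesis is the crucial ingredient in both cases.)
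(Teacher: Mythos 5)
Your proposal is correct and follows essentially the same route as the paper: reduce everything to showing that $b^{1,C}$ (resp.\ $b^{0,C}$) is almost nonincreasing and then invoke Lemma \ref{main_lemma}, with the one-nonzero-per-column hypothesis supplying the ``at most one exceptional row'' bound. The only cosmetic difference is that you exhibit the maximal matrix explicitly as a left-packed fill, whereas the paper derives the same structural property of the maximal matrix by an exchange argument; the substance is identical.
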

\begin{proof} 
(1)$\iff$(3): Anstee. (2)$\iff$(3): Trivial. (3)$\implies$(4): Trivial. (3)$\impliedby$(4): The idea of the proof is that the sequence $c_k =\sum_{i\leq k} (b_i^{1,C}-a_i)$ is almost concave wherever $a$ is constant. More precisely, we will apply Lemma \ref{main_lemma} by showing that $b^{1,C}$ is almost nonincreasing, and this will come from the fact that we have restricted $C$ to have most one nonzero entry per column.  Let $A$ be the maximal matrix in $S_{1,C}$. Then $A$ has column sums $b^{1,C}$. Let $j,k$ such that $1\leq j<k\leq n$. For any $i\in\{1,\dotsc,m\}$, if $A_{i j} = 0$ then $A_{i k} = 0$ unless $C_{i k} = 1$ (for otherwise, the matrix obtained by setting $A_{i j} = 1$ and $A_{i k} = 0$ would be strictly greater, and still belong to $S_{1,C}$.) Since $C$ has at most one nonzero entry per column, $A_{i j} \geq A_{i k}$ for all $i\in\{1,\dotsc,m\}$ except possibly one such $i$. Hence, $b^{1,C}_k-b^{1,C}_j\leq 1$, and therefore $b^{1,C}$ is almost nonincreasing.

The proof for $b^{0,C}$ is nearly identical, with the obvious changes, along with the observation that (when $1\leq j<k\leq n$ and $A$ is the maximal matrix in $S_{0,C}$), if $A_{i k} = 1$ then $A_{i j} = 1$ unless $C_{i j} = 1$.
\end{proof}

\subsubsection{Fulkerson's criterion for directed graphs}

We can apply the rather general criterion of Anstee to a case of particular interest: simple directed graphs. Given $a,b\in\N^n$, we say that $(a,b)$ is \emph{digraphic} if there exists a simple directed graph on $n$ vertices having out-degrees $a$ and in-degrees $b$ (that is, vertex $i$ has $a_i$ outgoing edges and $b_i$ incoming edges).  Such a graph can be represented by a $n\times n$ binary matrix with zeros on the diagonal, and having column and row sums $a,b$ respectively. Since there is only a single structural zero in each column, we can choose $C$ to be the $n\times n$ identity matrix $I$, and apply Anstee's criterion for matrices that avoid $I$. Given $a,b\in\N^n$, \italic{both} nonincreasing, with $\sum a_i =\sum b_i$, Fulkerson \cite{Fulkerson_1960} proved that $(a,b)$ is digraphic if and only if
\begin{equation}\tag{F$_k$}
\sum_{i\leq k} a_i\leq \sum_{i\leq k} (k-1)\wedge b_i +\sum_{i>k} k\wedge b_i
\end{equation}
for all $k\in\{1,\dotsc,n\}$. Anstee's criterion generalizes this to allow for arbitrary $a,b$ since we can always permute them together to make $a$ nonincreasing. (This generalization is due to Chen \cite{Chen_1966}.)
Note that 
$$\sum_{i\leq k} b^{0,I}_i=\sum_{i\leq k} (k-1)\wedge b_i +\sum_{i>k} k\wedge b_i =\sum_{i\leq k} b'_i-\#\{i: 1\leq i\leq k, \hs b_i\geq k\},$$
and when $b$ is nonincreasing we have $\#\{i: 1\leq i\leq k, \hs b_i\geq k\} = k\wedge b'_k$. The following is nearly a direct consequence of Theorem \ref{Anstee}.

\begin{theorem}[Reduced Fulkerson] Let $a,b\in\N^n$ with $a$ nonincreasing, $\max b_i\leq n-1$, and $\sum a_i =\sum b_i$. The following are equivalent:
\begin{enumerate}
\item $(a,b)$ is digraphic
\item $a\before b^{0,I}$
\item \textup{(F$_k$)} for all $k\in\{1,\dotsc,n\}$
\item \textup{(F$_k$)} for all $k\in C(a)$ such that $k\leq\max b_i$.
\end{enumerate}
If, further, $b$ is nonincreasing, then these are equivalent to
\begin{enumerate}
\item[(5)] $j a'_j+\sum_{i>j} a'_i\leq (\sum_{i\leq a'_j} b'_i)-a'_j\wedge b'_{a'_j}$ for all $j \in C(a')$ such that $a'_j\leq\max b_i$.
\end{enumerate}
\end{theorem}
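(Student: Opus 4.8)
The plan is to obtain everything from Theorem \ref{Anstee} by regarding a simple directed graph as a binary matrix with zero diagonal, i.e.\ a matrix that \emph{avoids} the identity $C = I$. Since $\max b_i\leq n-1$, the matrix $I$ is $b$-avoidable, and it has exactly one nonzero entry per column, so $b^{0,I}$ exists and (by the proof of Theorem \ref{Anstee}) is almost nonincreasing. The displayed formula before the theorem gives $\sum_{i\leq k} b^{0,I}_i =\sum_{i\leq k}(k-1)\wedge b_i+\sum_{i>k} k\wedge b_i$, so the Anstee inequality (A(0)$_k$) is \emph{literally} (F$_k$). Applying Theorem \ref{Anstee} with $C=I$ therefore yields at once (1)$\iff$(2)$\iff$(3), along with their equivalence to ``(F$_k$) for all $k\in C(a)$''.

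The only remaining point for (1)--(4) is to reconcile this unrestricted corner condition with the restricted one in (4). Here (3)$\implies$(4) is trivial, so I would focus on the converse by showing that (F$_k$) is automatic once $k>\max b_i$: in that range $(k-1)\wedge b_i = b_i$ and $k\wedge b_i = b_i$, whence $\sum_{i\leq k} b^{0,I}_i =\sum_i b_i =\sum_i a_i\geq\sum_{i\leq k} a_i$, using $\sum a_i =\sum b_i$. Thus assuming (F$_k$) only for corners $k\in C(a)$ with $k\leq\max b_i$ in fact gives (F$_k$) for \emph{all} corners, and Theorem \ref{Anstee} then returns (3).

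For (5), assuming in addition that $b$ is nonincreasing, I would use the identity $\#\{i:1\leq i\leq k,\ b_i\geq k\} = k\wedge b'_k$ recorded before the theorem to rewrite the generalized conjugate as $\sum_{i\leq k} b^{0,I}_i =\sum_{i\leq k} b'_i - k\wedge b'_k$. Then I would invoke Proposition \ref{conjugate}, applied to $a$ and the sequence $b^{0,I}$ with $l =\max b_i$: its condition (1) is precisely (4), while its condition (2) reads $j a'_j+\sum_{i>j} a'_i\leq\sum_{i\leq a'_j} b^{0,I}_i$ for all $j\in C(a')$ with $a'_j\leq\max b_i$. Substituting $k = a'_j$ into the rewritten conjugate converts the right-hand side into $(\sum_{i\leq a'_j} b'_i)-a'_j\wedge b'_{a'_j}$, which is exactly (5).

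I expect no deep obstacle, consistent with the remark that this is ``nearly a direct consequence'' of Anstee; the parts demanding the most care are purely bookkeeping. The delicate points are confirming that (F$_k$) becomes automatic \emph{exactly} for $k>\max b_i$ (so that the corner range may legitimately be capped at $k\leq\max b_i$), and verifying that the hypotheses of Proposition \ref{conjugate} genuinely hold with the substituted sequence $b^{0,I}$ in place of $b$ and with $l=\max b_i$, so that the count identity $k\wedge b'_k$ lands correctly as the $a'_j\wedge b'_{a'_j}$ term in (5).
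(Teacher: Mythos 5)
Your proposal is correct and follows essentially the same route as the paper: derive (1)--(3) and the corner reduction from Theorem \ref{Anstee} with $C=I$, observe that (F$_k$) holds automatically for $k>\max b_i$ because the cumulative sums of $b^{0,I}$ have saturated at $\sum b_i=\sum a_i$, and obtain (5) from Proposition \ref{conjugate} together with the identity $\#\{i\leq k: b_i\geq k\}=k\wedge b'_k$. Your write-up merely spells out the bookkeeping that the paper leaves implicit.
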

\begin{proof}
(1)$\iff$(2): Theorem \ref{Anstee}. (2)$\iff$(3): Trivial. 
(3)$\implies$(4): Trivial. (3)$\impliedby$(4): By Theorem \ref{Anstee}, it suffices for (F$_k$) to hold for all $k\in C(a)$. Further, (F$_k$) holds for all $k>\max b_i$, since $b^{0,I}_k =0$ for all $k>1+\max b_i$ (and $\sum a_i =\sum b_i$).  (4)$\iff$(5): Proposition \ref{conjugate}.
\end{proof}


\subsubsection{Mubayi-Will-West criterion for imbalance sequences}

Given $d\in\Z^n$, we say that $d$ is an \emph{imbalance sequence} if there exists a simple directed graph with out-degrees $a$ and in-degrees $b$ such that $d_i = a_i-b_i$.  Strictly speaking, an imbalance sequence is not a degree sequence, however, Mubayi, Will, and West \cite{Mubayi_2001} give a criterion for imbalance sequences that we can treat similarly to the degree sequence criteria.

\begin{theorem}[Reduced Mubayi-Will-West] Let $d\in\Z^n$ be nonincreasing with $\sum d_i = 0$. The following are equivalent:
\begin{enumerate}
\item $d$ is an imbalance sequence
\item $\sum_{i\leq k} d_i\leq k(n-k)$ for all $k\in\{1,\dotsc,n\}$
\item $\sum_{i\leq k} d_i\leq k(n-k)$ for all $k\in\{1,\dotsc,n-1\}$ such that $d_k-d_{k+1}\geq 3$.
\end{enumerate}
\end{theorem}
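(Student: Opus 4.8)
The equivalence (1)$\iff$(2) is the criterion of Mubayi, Will, and West, and (2)$\implies$(3) is trivial, so the entire content is the implication (3)$\implies$(2). The plan is to run the same concavity argument used throughout this section, organized around the auxiliary sequence
$$c_k = k(n-k) - \sum_{i\leq k} d_i, \qquad k\in\{0,1,\dotsc,n\},$$
where I adopt the conventions $d_0 = 0$ and $\sum_{i\leq 0} d_i = 0$, so that $c_0 = 0$; moreover $c_n = 0$ because $\sum_{i\leq n} d_i = 0$. Condition (2) is exactly the assertion that $c_k\geq 0$ for all $k$, and I want to deduce it from the hypothesis that $c_k\geq 0$ at the special indices singled out in (3).

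First I would compute the second difference of $c$ using finite calculus. Writing $S_k =\sum_{i\leq k} d_i$ and $f_k = k(n-k)$, a direct calculation gives $\.f_k = n+1-2k$ and hence $\:f_k = -2$ for all $k$, so $f$ is concave with constant curvature $-2$. Since $\.S_k = d_k$, we have $\:S_k =\.d_k$, and therefore
$$\:c_k = \:f_k - \:S_k = -2 -\.d_k = (d_{k-1}-d_k) - 2.$$
Because $d$ is integer-valued and nonincreasing, $d_{k-1}-d_k\geq 0$, and the crucial observation is that $\:c_k\leq 0$ \emph{unless} $d_{k-1}-d_k\geq 3$. In other words, $c$ is concave at every index except those immediately following a drop of size at least $3$ in $d$. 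This is precisely where the threshold $3$ in statement (3) originates, and it explains why the reduction here is to gaps of size $\geq 3$ rather than to all corners: the concave ``background'' curvature $-2$ contributed by $f_k = k(n-k)$ absorbs gaps of size up to $2$.

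Next I would partition. Let $K =\{k: 1\leq k\leq n-1,\ d_k-d_{k+1}\geq 3\}$ be the index set appearing in (3), and consider the breakpoints $K\cup\{0,n\}$. On each interval $[a,b]$ determined by two consecutive breakpoints, every internal gap $d_i-d_{i+1}$ (for $a<i<b$) is at most $2$, so by the formula above $\:c_k\leq 0$ for every $k$ interior to the interval; hence $(c_a,\dotsc,c_b)$ is concave. The endpoints of each such interval are either $0$ or $n$ (where $c = 0$) or a point of $K$ (where $c\geq 0$ by hypothesis (3)). Applying the concave case of Lemma \ref{concave_lemma} --- a concave integer sequence with nonnegative endpoints is everywhere nonnegative --- to each subsequence $(c_a,\dotsc,c_b)$ yields $c_k\geq 0$ throughout $\{0,\dotsc,n\}$, which is exactly (2).

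I do not anticipate a serious obstacle: the argument is a clean instance of the concavity method of this section, and the only point requiring care is bookkeeping at the endpoints --- in particular anchoring the leftmost and rightmost intervals at $c_0 = 0$ and $c_n = 0$, so that indices such as $k = 1$ that are not themselves breakpoints are still covered. The one genuinely illuminating step is the second-difference computation $\:c_k = (d_{k-1}-d_k) - 2$, which makes transparent why imbalance sequences admit a reduction to gaps of size $3$ rather than to the size-$1$ corner condition seen in the other criteria of this section.
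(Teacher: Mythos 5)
Your proof is correct and takes essentially the same approach as the paper: the paper likewise sets $c_k = k(n-k)-\sum_{i\leq k}d_i$, computes $\ddot c_k = -2 + d_{k-1}-d_k$, observes that $c$ is concave on any interval whose internal gaps are at most $2$, and applies Lemma \ref{concave_lemma} on each piece. Your version merely makes explicit the partition into intervals between breakpoints and the anchoring at $c_0 = c_n = 0$, which the paper leaves implicit.
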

\begin{proof} 
(1)$\iff$(2): Mubayi, Will, and West. (2)$\implies$(3): Trivial. (2)$\impliedby$(3): If $c_k = k(n-k)-\sum_{i\leq k} d_i$ then for $k\in\{2,\dotsc,n\}$ we have $\:c_k =-2+ d_{k-1}-d_k$, which is less or equal to $0$ as long as $d_{k-1}-d_k\leq 2$. Thus, $c$ is concave on any interval $(j,\dotsc,l)$ such that $d_{k-1}-d_k\leq 2$ for all $k\in\{j+2,\dotsc,l\}$.  Apply Lemma \ref{concave_lemma}.
\end{proof} 

This implies some interesting sufficient (but not necessary) conditions.

\begin{corollary} 
\label{imbalance_corollary} Let $d\in\Z^n$ be nonincreasing with $\sum d_i = 0$. Then $d$ is an imbalance sequence if $d_k-d_{k+1}\leq 2$ for all $k\in\{1,\dotsc,n-1\}$. In particular, it is an imbalance sequence if $\{d_1,\dotsc,d_n\}$ contains every even integer or every odd integer (or every integer) between $\min d_i$ and $\max d_i$.
\end{corollary}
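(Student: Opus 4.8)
The plan is to derive both assertions directly from the Reduced Mubayi-Will-West theorem, relying on the equivalence (1)$\iff$(3) established there. For the first claim, the point is that the hypothesis makes condition (3) vacuous. Precisely, the assumption that $d_k-d_{k+1}\leq 2$ for all $k\in\{1,\dotsc,n-1\}$ says there is \emph{no} index $k$ with $d_k-d_{k+1}\geq 3$; hence the family of inequalities in condition (3) is empty and so (3) holds trivially. The implication (3)$\implies$(1) of the theorem then immediately yields that $d$ is an imbalance sequence. No further work is needed here, since the substantive content lives entirely in the reduced criterion.

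For the ``in particular'' clause, I would reduce each of the stated density conditions to the gap bound $d_k-d_{k+1}\leq 2$ and invoke the first claim. The argument is by contradiction: suppose some $k$ satisfies $d_k-d_{k+1}\geq 3$. Then the two consecutive integers $d_{k+1}+1$ and $d_{k+1}+2$ lie strictly between $d_{k+1}$ and $d_k$, and since $d$ is nonincreasing with $\min d_i=d_n\leq d_{k+1}$ and $\max d_i=d_1\geq d_k$, they lie strictly inside the open interval $(\min d_i,\max d_i)$. Being consecutive, exactly one of them is even and the other odd. Whichever parity the hypothesis guarantees, the corresponding value $v$ must occur as some $d_j$; but monotonicity forces $d_j\geq d_k>v$ for $j\leq k$ and $d_j\leq d_{k+1}<v$ for $j\geq k+1$, so in fact $v\notin\{d_1,\dotsc,d_n\}$, a contradiction. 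Thus $d_k-d_{k+1}\leq 2$ for every $k$, and the first claim applies. The ``every integer'' case follows a fortiori, as it subsumes both parities.

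The only thing requiring care is the boundary bookkeeping in the second part: one must verify that the witnessing values $d_{k+1}+1$ and $d_{k+1}+2$ are genuinely interior to $(\min d_i,\max d_i)$, so that the density hypothesis actually applies to them. This is exactly where the gap assumption $d_k-d_{k+1}\geq 3$ (which guarantees \emph{two} integers strictly between $d_{k+1}$ and $d_k$, hence one of each parity) and the monotonicity of $d$ are both used. Beyond this straightforward verification there is no real obstacle, since the heart of the result is already packaged in the Reduced Mubayi-Will-West theorem.
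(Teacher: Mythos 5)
Your proposal is correct and follows exactly the route the paper intends: the paper states the corollary without proof as an immediate consequence of the Reduced Mubayi--Will--West theorem, since the gap bound $d_k-d_{k+1}\leq 2$ makes condition (3) vacuously true, and the parity conditions force that gap bound by the consecutive-integers argument you give. Nothing is missing.
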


\subsection{Undirected graphs}

\subsubsection{Chen's criterion for $r$-multigraphs}

An \emph{$r$-multigraph} is a (loopless) graph allowing multiple edges, but with no more than $r$ edges connecting each pair of vertices. An $r$-multigraph with degree sequence $a$ can be represented by a symmetric nonnegative integer matrix with zero diagonal, column and row sums $a$, and all entries less or equal to $r$.  Given $a\in\N^n$ and $r\in\N$, we say that $a$ is \emph{$r$-multigraphic} if there exists an $r$-multigraph with degrees $a$. Given $a\in\N^n$ nonincreasing such that $\sum a_i$ is even, Chungphaisan \cite{Chungphaisan_1974} has shown that $a$ is $r$-multigraphic if and only if
\begin{equation}\tag{C1$_k$}
\sum_{i\leq k} a_i\leq r k(k-1)+\sum_{i>k}(r k)\wedge a_i
\end{equation}
for all $k\in\{1,\dotsc,n\}$. Curiously, this criterion does not lend itself to description in terms of generalized conjugates --- however, there is an equivalent criterion that does. Let $S\subset\N^{n\times n}$ be the subset of matrices with zero diagonal, row sums $a$, and all entries less or equal to $r$. Let $a^C$ be the corresponding generalized conjugate, assuming $\max a_i\leq r(n-1)$. One can check that
$$\sum_{i\leq k} a^C_i =\sum_{i\leq k} (r(k-1))\wedge a_i +\sum_{i>k} (r k)\wedge a_i,$$
and when $a$ is nonincreasing we have that if $a_k\geq r k$ then $\sum_{i\leq k} a^C_i =\sum_{i\leq r k} (a'_i-1)$.
Given $a\in\N^n$ nonincreasing such that $\sum a_i$ is even, Chen \cite{Chen_1966} proved that $a$ is $r$-multigraphical if and only if
\begin{equation}\tag{C2$_k$}
\sum_{i\leq k} a_i\leq \sum_{i\leq k} (r(k-1))\wedge a_i +\sum_{i>k} (r k)\wedge a_i
\end{equation}
for all $k\in\{1,\dotsc,n\}$. Note that (C1$_k$) and (C2$_k$) are identical when $a_k\geq r(k-1)$ (and $a$ is nonincreasing).

\begin{theorem}[Reduced Chen] \label{Chen} Let $a\in\N^n$ and $r\in\N$ such that $a$ is nonincreasing, $\max a_i\leq r(n-1)$, and $\sum a_i$ is even. Let $m =\max\{i: a_i\geq r(i-1)+1\}$. The following are equivalent:
\begin{enumerate}
\item $a$ is $r$-multigraphic
\item $a\before a^C$
\item \textup{(C1$_k$)} for all $k\in\{1,\dotsc,n\}$
\item \textup{(C1$_k$)} for $k = m$ and all $k\in C(a)$ such that $k<m$
\item \textup{(C2$_k$)} for $k = m$ and all $k\in C(a)$ such that $k<m$
\item \textup{(C2$_m$)} and $j a'_j+\sum_{i>j} a'_i\leq\sum_{i\leq r a'_j} (a'_i-1)$ for all $j \in C(a')$ such that $a'_j<m$. 
\end{enumerate}
\end{theorem}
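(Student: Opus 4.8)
The plan is to prove the chain of equivalences along the lines of the earlier reduced-criteria theorems, taking the two cited criteria as anchors and reserving the real work for the passage from the corner set back to the full set. First I would record the two ``for all $k$'' equivalences for free: (1)$\iff$(3) is Chungphaisan's criterion, and (1)$\iff$(2) is Chen's criterion, since by the displayed formula for $\sum_{i\le k}a^C_i$ the relation $a\before a^C$ is exactly the conjunction of (C2$_k$) over all $k\in\{1,\dotsc,n\}$. The implication (3)$\implies$(4) is trivial. I would also note at the outset that $\{i:a_i\ge r(i-1)+1\}$ is an initial segment (if $a_i\ge r(i-1)+1$ then $a_{i-1}\ge a_i\ge r(i-2)+1$), so that $a_k\ge r(k-1)+1$ for $k\le m$ and $a_k\le r(k-1)$ for $k>m$.

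The heart is (4)$\implies$(3). Writing $c_k=rk(k-1)+\sum_{i>k}(rk)\wedge a_i-\sum_{i\le k}a_i$, so that (C1$_k$)$\iff c_k\ge 0$, I split at $m$. For $k>m$ a direct computation gives $\.c_k=2\big(r(k-1)-a_k\big)\ge 0$ (the capped sums telescope cleanly because $a_i\le a_k\le r(k-1)<rk$ makes every cap inactive), so $c$ is nondecreasing on $(m,\dotsc,n)$ and $c_m\ge 0$ forces $c_k\ge 0$ for all $k\ge m$. For $k\le m$ I use that (C1$_k$) and (C2$_k$) coincide there (their right-hand sides differ by $\sum_{i\le k}\big(r(k-1)-(r(k-1))\wedge a_i\big)$, which vanishes once $a_k\ge r(k-1)$), so $c_k=\tilde c_k:=\sum_{i\le k}(a^C_i-a_i)$ on $(1,\dotsc,m)$. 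The decisive point is that $a^C$ is \emph{nonincreasing} on $(1,\dotsc,m)$: from the explicit formula, for $2\le k\le m$ the first part of $a^C_k$ simplifies to $(k-1)r$ (since $a_i\ge a_{k-1}\ge r(k-1)+1$ for $i\le k-1$), and after cancellation
$$\.a^C_k=\sum_{i>k}\Big([(rk)\wedge a_i-(r(k-1))\wedge a_i]-[(r(k-1))\wedge a_i-(r(k-2))\wedge a_i]\Big)\le 0,$$
each summand being a second difference of the concave map $x\mapsto x\wedge a_i$ at the equally spaced points $r(k-2),r(k-1),rk$. Hence $\tilde c$ is concave on each maximal stretch where $a$ is constant, and applying Lemma \ref{concave_lemma} stretch by stretch --- using the checked corners $k<m$ as endpoints and the checked index $m$ as the right endpoint of the stretch straddling $m$ --- gives $\tilde c\ge 0$, i.e. $c_k\ge 0$, on $(1,\dotsc,m)$. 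Combining the two ranges yields (3).

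The remaining equivalences are bookkeeping. (4)$\iff$(5) is the term-by-term identity of (C1$_k$) and (C2$_k$) for $k\le m$ just established. For (5)$\iff$(6) the inequality (C2$_m$) is common to both; for the corner inequalities I apply Proposition \ref{conjugate} with $b=a^C$ and $l=m-1$ to translate ``(C2$_k$) for all $k\in C(a)$ with $k<m$'' into ``$ja'_j+\sum_{i>j}a'_i\le\sum_{i\le a'_j}a^C_i$ for all $j\in C(a')$ with $a'_j<m$'', and then invoke the identity $\sum_{i\le a'_j}a^C_i=\sum_{i\le ra'_j}(a'_i-1)$ noted above. This identity is legitimate precisely because $a'_j<m$ forces its hypothesis $a_{a'_j}\ge r\,a'_j$: indeed $a'_j<m$ gives $a_{a'_j+1}\ge r\,a'_j+1$, whence $a_{a'_j}=j\ge a_{a'_j+1}\ge r\,a'_j$.

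I expect the main obstacle to be the monotonicity claim $\.a^C_k\le 0$ on $(1,\dotsc,m)$. Globally $a^C$ need not even be almost nonincreasing (for $a=(4,0,0)$, $r=2$ one gets $a^C=(0,2,2)$), so the corner reduction cannot rest on a blanket monotonicity of the generalized conjugate as in the Gale--Ryser and Berge cases; it is exactly the restriction to $k\le m$, where $a_{k-1}\ge r(k-1)+1$ makes the first $k-1$ capped terms saturate, that produces the cancellation leaving only the nonpositive second-difference sum. Getting this telescoping right, and verifying that the index $m$ genuinely anchors the straddling stretch so that no $k\le m$ is left unchecked, is where the care will be needed.
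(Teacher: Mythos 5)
Your proof is correct and follows essentially the same route as the paper: anchor on Chungphaisan and Chen, handle $k>m$ by showing the slack in (C1$_k$) is nondecreasing there, show $a^C$ is nonincreasing on $(1,\dotsc,m)$ so the corner/concavity lemma applies below $m$, and finish with Proposition \ref{conjugate}. The only difference is presentational: where the paper asserts the monotonicity of $a^C$ on $(1,\dotsc,m)$ by inspecting the maximal matrix, you verify it algebraically via second differences of $x\mapsto x\wedge a_i$, which is a sound (and arguably more transparent) justification of the same claim.
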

\begin{proof} 
(1)$\iff$(2): Chen. 
(1)$\iff$(3): Chungphaisan. 
(4)$\iff$(5): (C1$_k$) and (C2$_k$) are identical when $k\leq m$.
(3)$\implies$(4): Trivial.
(3)$\impliedby$(4): If $m\leq k<n$ and (C1$_k$) holds, then $a_{k+1}<r k+1$, thus $a_{k+1}\leq r k$, and (C1$_{k+1}$) follows by using $$\sum_{i>k} (r k)\wedge a_i\leq r k +\sum_{i>k+1} (r(k+1))\wedge a_i.$$
Thus, (C1$_m$)$,\dotsc,$(C1$_n$) hold.
Since $a_1\geq\cdots\geq a_m$ we have $a_i\geq r(m-1)$ for all $i\in\{1,\dotsc,m\}$. Using this and considering the maximal matrix, it is clear that $a^C$ is nonincreasing on $(1,\dotsc,m)$. Apply Lemma \ref{main_lemma} to $(a_1,\dotsc,a_m)$ and $(a^C_1,\dotsc,a^C_m)$.
(5)$\iff$(6): Apply Proposition \ref{conjugate}, and use the fact that when $k<m$ we have $a_k\geq a_m\geq r(m-1)\geq r k$, so the formula $\sum_{i\leq k} a_i^C =\sum_{i\leq r k}(a'_i -1)$ applies.
\end{proof} 

As in the case of bipartite graphs, it is not always necessary to check all the corners less than $m$:
If $k_1<k_2<k_3\leq m$ are three consecutive corners such that $a_{k_1} = a_{k_2}+1 = a_{k_3}+2$ and both (C1$_{k_1}$) and (C1$_{k_3}$) hold, then (C1$_{k_2}$) holds as well.

\subsubsection{Erd\H{o}s-Gallai and Berge criteria for simple undirected graphs}

At long last, we come to the simple undirected graph. Thanks to our efforts on $r$-multigraphs, we can handle it as the special case $r = 1$. A simple undirected graph with degree sequence $a$ can be represented by a symmetric binary matrix with zero diagonal, having column and row sums $a$. Given $a\in\N^n$, we say that $a$ is \emph{graphic} if there exists a simple undirected graph with degree sequence $a$. Given $a\in\N^n$ nonincreasing such that $\sum a_i$ is even, Erd\H{o}s and Gallai \cite{Erdos_1960} proved that $a$ is graphic if and only if
\begin{equation}\tag{EG$_k$}
\sum_{i\leq k} a_i\leq k(k-1)+\sum_{i>k} k\wedge a_i
\end{equation}
for all $k\in\{1,\dotsc,n\}$. With reference to our discussion of $r$-multigraphs, the reader will see that this is simply (C1$_k$) in the case of $r = 1$. As before, this criterion does not lend itself to description in terms of generalized conjugates. The equivalent criterion we gave in the case of $r$-multigraphs specializes to a criterion due to Berge \cite{Berge_1970}. Let $a^E$ be the generalized conjugate given above for $r$-multigraphs, in the case of $r = 1$. It satisfies the identity
$\sum_{i\leq k} a^E_i =\sum_{i\leq k} (k-1)\wedge a_i +\sum_{i>k} k\wedge a_i, $
and in fact
$$a^E_k =\#\{i: 1\leq i<k, \hs a_i\geq k-1\} +\#\{i: k<i\leq n,\hs a_i\geq k\}. $$
When $a$ is nonincreasing, $a^E$ has been referred to as the \emph{corrected conjugate} \cite{Berge_1970}, and in this case, if $a_k\geq k$ then $a^E_k = a'_k-1$.
Consider the inequality:
\begin{equation}\tag{BG$_k$}
\sum_{i\leq k} a_i\leq \sum_{i\leq k} (a'_i-1).
\end{equation}
The following theorem is a direct consequence of Theorem \ref{Chen}.

\begin{theorem}[Reduced Erd\H{o}s-Gallai, Berge] Let $a\in\N^n$ be nonincreasing with $\max a_i\leq n-1$ and $\sum a_i$ even. Let $m =\max\{i: a_i\geq i\}$. The following are equivalent:
\begin{enumerate}
\item $a$ is graphic
\item $a\before a^E$
\item \textup{(EG$_k$)} for all $k\in\{1,\dotsc,n\}$
\item \textup{(EG$_k$)} for $k = m$ and all $k\in C(a)$ such that $k<m$
\item \textup{(BG$_k$)} for $k = m$ and all $k\in C(a)$ such that $k<m$
\item \textup{(BG$_m$)} and $j a'_j+\sum_{i>j} a'_i\leq\sum_{i\leq a'_j} (a'_i-1)$ for all $j \in C(a')$ such that $a'_j<m$.
\end{enumerate}
\end{theorem}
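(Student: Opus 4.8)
The plan is to obtain this theorem as the special case $r = 1$ of Theorem \ref{Chen} (Reduced Chen), verifying that each of the six conditions stated here is precisely the $r=1$ instance of the corresponding condition there. First I would observe that a simple undirected graph is exactly a $1$-multigraph: allowing at most $r=1$ edge between each pair of vertices forbids multiple edges, and the loopless condition is shared. Hence \emph{graphic} coincides with \emph{$1$-multigraphic}, so statement (1) here matches statement (1) of Theorem \ref{Chen}. The hypotheses line up directly: $\max a_i \leq n-1$ is $\max a_i \leq r(n-1)$ at $r=1$, the parity condition is unchanged, and $m = \max\{i : a_i \geq i\}$ is $\max\{i : a_i \geq r(i-1)+1\}$ at $r=1$. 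By construction $a^E$ is $a^C$ specialized to $r=1$, so statement (2) matches statement (2); and substituting $r=1$ into (C1$_k$) produces exactly (EG$_k$), so statements (3) and (4) here match statements (3) and (4) of Theorem \ref{Chen}.

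It then remains to identify the conjugate-form conditions. Substituting $r=1$ into (C2$_k$) gives $\sum_{i\leq k} a_i \leq \sum_{i\leq k}(k-1)\wedge a_i + \sum_{i>k} k\wedge a_i = \sum_{i\leq k} a^E_i$, so I must show that this is the same inequality as (BG$_k$), i.e.\ that $\sum_{i\leq k} a^E_i = \sum_{i\leq k}(a'_i - 1)$ for the relevant indices $k\leq m$. This follows termwise from the recorded identity $a^E_i = a'_i - 1$ whenever $a_i \geq i$: for $i \leq m$ the sequence is nonincreasing and $a_i \geq a_m \geq m \geq i$, so $a_i \geq i$ holds and the two partial sums agree. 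Hence (C2$_k$) $\iff$ (BG$_k$) for every $k \leq m$, so statement (5) here matches statement (5) of Theorem \ref{Chen}, and in particular (C2$_m$) becomes (BG$_m$). Finally, putting $r=1$ in the conjugate inequality of statement (6) of Theorem \ref{Chen} turns $\sum_{i\leq r a'_j}(a'_i - 1)$ into $\sum_{i\leq a'_j}(a'_i - 1)$, so statement (6) here matches as well, and an appeal to Theorem \ref{Chen} completes the equivalences.

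I expect the only nonroutine point to be the equivalence (C2$_k$) $\iff$ (BG$_k$) for $k\leq m$, which rests entirely on the corrected-conjugate identity $a^E_k = a'_k - 1$ for $a_k \geq k$; once that termwise identity is established via $a_i \geq a_m \geq m \geq i$ on $\{1,\dotsc,m\}$, every condition is a transparent substitution of $r = 1$. No machinery beyond Theorem \ref{Chen} and the already-recorded properties of $a^E$ and $a'$ should be required.
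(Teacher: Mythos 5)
Your proposal is correct and matches the paper's approach exactly: the paper states the theorem as a direct consequence of Theorem \ref{Chen} with $r=1$, and your verification of the substitutions (including the identification of (C2$_k$) with (BG$_k$) for $k\leq m$ via the corrected-conjugate identity $a^E_k = a'_k-1$ when $a_k\geq k$) fills in precisely the routine details the paper leaves implicit.
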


Li \cite{Li_1975} proved the reduction to $k\leq \max\{i: a_i\geq i-1\}$. Eggleton \cite{Eggleton_1975} proved the reduction to $k\in C(a)$. The further reduction to $k = m$ and all $k\in C(a)$ such that $k<m$ is due to Zverovich and Zverovich \cite{Zverovich_1992}, who also observed that some corners can be skipped (in the same way as before).

As in the case of bipartite graphs, let us investigate further reductions in a couple special cases. The first part of the following result is a key lemma in an interesting recent paper by Barrus, Hartke, Jao, and West \cite{Barrus_2011}. The proof below is another nice example of the clarity afforded by using finite calculus.

\begin{proposition} Assume $a\in\N^n$ is nonincreasing, $\sum a_i$ is even, $p =\minimum a_i \geq 1$, and $q =\max a_i \leq n-1$. Denote $\bar a_j =\#\{i: a_i = j\}$. Suppose:
\begin{enumerate}
\item[\textup{(A)}] $\bar a_j\leq 1$ for all $j$ s.t. $p<j<q$, except possibly one $j$ for which $\bar a_j=2$,
\item[or]
\item[\textup{(B)}] $\bar a_j\geq 1$ for all $j$ s.t. $p<j<q$, except possibly one $j$ for which $\bar a_j=0$.
\end{enumerate}
Then $a$ is graphic if and only if \textup{(EG$_m$)} holds, where $m =\max\{i: a_i\geq i\}$.
\end{proposition}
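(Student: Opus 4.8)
The forward implication is trivial: if $a$ is graphic then every (EG$_k$) holds, in particular (EG$_m$). For the converse, assume (EG$_m$). By the Reduced Erd\H{o}s--Gallai criterion above it suffices that (EG$_k$) hold for $k=m$ and for every corner $k\in C(a)$ with $k<m$; since all these indices lie in $\{1,\dots,m\}$, the plan is to show that the slack $c_k=\sum_{i\le k}(a^E_i-a_i)$ satisfies $c_k\ge 0$ for all $k\in\{1,\dots,m\}$. On this range $a_k\ge k$, so $a^E_k=a'_k-1$, whence $\.c_k=a'_k-1-a_k$ and
\[
\:c_k=\.a'_k-\.a_k=(a_{k-1}-a_k)-\bar a_{k-1},
\]
the primal gap at position $k$ minus the multiplicity of the value $k-1$. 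Two endpoints come for free: $c_1=a'_1-1-a_1=n-1-q\ge 0$ since $q\le n-1$, and $c_m\ge 0$ is exactly (EG$_m$). The whole game is to rule out an interior minimum of $c$ on $\{1,\dots,m\}$, and this is where (A) and (B) enter.

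I would treat (B) on the primal side. For $k\le p$ every entry satisfies $a_i\ge p\ge k$, so $a'_k=n$ and $\.c_k=n-a_k-1\ge n-q-1\ge 0$; thus $c$ is nondecreasing on $(1,\dots,p)$ and $c_p\ge c_1\ge 0$. On the block $(p,\dots,m)$ I claim $c$ is almost concave. Indeed (B) allows at most one absent value in $(p,q)$, which forces every primal gap to be $\le 1$ except for a single gap of $2$ straddling that value; and for $p<k\le m$ the value $k-1$ lies in $[p,q)$, so $\bar a_{k-1}\ge 1$ unless $k-1$ is the one missing value. Hence $\:c_k=(a_{k-1}-a_k)-\bar a_{k-1}\le 0$ for all $k\in(p,\dots,m)$ with at most one exception where $\:c_k\le 1$, so every partial sum $\.c_k-\.c_j=\sum_{i=j+1}^{k}\:c_i$ (for $p<j<k\le m$) is at most $1$ and $c$ is almost concave on $(p,\dots,m)$. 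As $c_p\ge 0$ and $c_m\ge 0$, Lemma \ref{concave_lemma} gives $c_k\ge 0$ throughout $(p,\dots,m)$. Together with the first block, $c\ge 0$ on $\{1,\dots,m\}$, so $a$ is graphic.

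For (A) I would pass to the conjugate. Condition (A) bounds the multiplicities $\bar a_j=a'_j-a'_{j+1}$, i.e.\ the successive differences of $a'$, and so is precisely condition (B) for the sequence $a'$; moreover the Durfee index $m=\max\{i:a_i\ge i\}$ is invariant under conjugation. Using Proposition \ref{conjugate} to rewrite the corner inequalities (EG$_k$), $k\in C(a)$, as the mirror-image inequalities attached to the corners of $a'$, the computation of the preceding paragraph applies with the roles of $a$ and $a'$ interchanged, the single admissible multiplicity-$2$ value of (A) now playing the part of the single missing value. This again yields $c\ge 0$ on $\{1,\dots,m\}$ and hence graphicness.

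The main obstacle is the bookkeeping in the middle block, and it is exactly where the hypotheses do their work: one must check that a lone value defect cannot spoil two second differences at once. This is delicate because in $\:c_k=(a_{k-1}-a_k)-\bar a_{k-1}$ the gap and the multiplicity sit at different heights of the spectrum, so one has to verify that the gap of $2$ produced by a missing value and the vanishing multiplicity it creates never land on the same index $k\le m$ --- they fall on opposite sides of the Durfee diagonal $v=m$, leaving at most one exceptional second difference (and any convex ``spike'' at $k\le p$ is harmless, being absorbed into the nondecreasing first block). The second point requiring care is making the conjugate reduction for (A) precise, transporting the inequalities through Proposition \ref{conjugate} so that (EG$_m$) really is the shared binding constraint on both sides, rather than appealing to any (false) equivalence between $a$ and $a'$ being graphic.
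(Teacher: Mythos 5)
Your case (B) is essentially the paper's own argument: with $c_k=\sum_{i\le k}(a'_i-a_i-1)$ one gets $0\le c_1\le\cdots\le c_p$, then almost concavity of $c$ on $(p,\dotsc,m)$, then Lemma \ref{concave_lemma}. The one step you assert without verifying --- that the gap-$2$ index and the zero-multiplicity index cannot both lie in $\{1,\dotsc,m\}$ --- is exactly the point the paper proves, and it is a one-line check: if the gap of $2$ sits at a position $k_0\le m$, the missing value is $a_{k_0}+1\ge a_m+1\ge m+1$, so the index $k_1=a_{k_0}+2$ at which $\bar a_{k_1-1}=0$ exceeds $m$ (and symmetrically). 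Write that line and (B) is complete.

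Case (A) has a genuine gap. Your plan is to conjugate: observe that (A) for $a$ is (B) for $a'$, transport the corner inequalities through Proposition \ref{conjugate}, and rerun the (B) computation with the roles of $a$ and $a'$ interchanged. But the transported inequalities are $j a'_j+\sum_{i>j}a'_i\le\sum_{i\le a'_j}(a'_i-1)$, which are \emph{not} the inequalities (BG$_j$) for the sequence $a'$: the slack of (BG$_k$) applied to $a'$ is $\sum_{i\le k}(a_i-a'_i-1)=-c_k-2k$, which is typically negative even when $a$ is graphic (take $a=(3,2,2,1)$, $a'=(4,3,1,0)$; at $k=1$ the slack is $-2$). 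So the computation of your (B) paragraph does not ``apply with roles interchanged,'' there is no usable symmetry between $c_k(a)$ and $c_k(a')$, and the sentence in which you promise to ``make the conjugate reduction precise'' is deferring precisely the part of the argument that does all the work. The paper's proof of (A) is instead direct: setting $r=\max\{p,\bar a_q\}$, one shows $0\le c_1\le\cdots\le c_p$, that $c$ is concave on $(p,\dotsc,r)$, and that $c_r\ge\cdots\ge c_m\ge 0$; the last step uses that for $r<k\le m$ and $l=a'_k$ the values $a_k,\dotsc,a_l$ all lie strictly between $p$ and $q$, so (A) forces $a_k-a_l\ge l-k-1$ and hence $\.c_k=a'_k-a_k-1\le k-a_{a'_k}\le 0$. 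You need an argument of this kind --- one that works directly with $c$ on the range $(r,\dotsc,m)$ --- to close case (A).
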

\begin{proof} Let $c_k =\sum_{i\leq k}(a'_i-a_i-1)$ for $k\in\{1,\dotsc,n\}$, noting that $a$ is graphic if and only if $c_1,\dotsc,c_m\geq 0$. Also, (EG$_m$) if and only if $c_m\geq 0$.

(A) Suppose (A) and (EG$_m$) hold. Let $r =\max\{p,\bar a_q\}$. Then $c_m \geq 0$ and
$$0\leq c_1\leq\cdots\leq c_p$$
since $\.c_k = a'_k-a_k-1 =n-a_k -1\geq 0$ for all $k\in\{1,\dotsc,p\}$. Further, 
$$c \mbox{ is concave on } (p,\dotsc,r)$$
since $\: c_k =\.a'_k\leq 0$ for all $k\in\{p+2,\dotsc,r\}$. So if $r\geq m$, then $c_1,\dotsc,c_m\geq 0$. 

Suppose, then, that $r<m$. Let $k$ such that $r<k\leq m$, and let $l = a'_k$. Then $l\geq k$ (since $k\leq m$) and $q>a_k\geq\cdots\geq a_l>p$ (since $k>r\geq\bar a_q$ and $a_l\geq k>r\geq p$). Thus, condition (A) applies to $a_k,\dotsc,a_l$, so $a_k-a_l\geq l-k-1$. Using $l = a'_k$, this yields $\.c_k = a'_k-a_k-1\leq k-a_{a'_k}\leq 0$. Hence, 
$$c_r\geq\cdots\geq c_m\geq 0. $$
Combined with the fact that $0\leq c_1\leq\cdots\leq c_p$ and $c$ is concave on $(p,\dotsc,r)$, this shows that $c_1,\dotsc,c_m\geq 0$.

(B) Suppose (B) and (EG$_m$) hold. Just as in the proof for (A), $c_m \geq 0$ and
$$0\leq c_1\leq\cdots\leq c_p.$$
Now, condition (B) implies that $\.a_k\geq-1$ for all $k\in\{2,\dotsc,n\}$ except possibly one $k_0$ for which $\.a_{k_0} =-2$, and $\.a'_k\leq-1$ for all $k\in\{p+2,\dotsc,q\}$ except possibly one $k_1$ for which $\.a'_{k_1} =0$.
If such an exception occurs, $k_0$ and $k_1$ cannot both be less or equal to $m$ (since if $k_0\leq m$ then $k_1=a_{k_0}+2$ and $a_{k_0}\geq a_m \geq m$ imply that $k_1>m$; similarly, if $k_1\leq m$ then $k_0=a'_{k_1}+1$ and $a'_{k_1}\geq a'_m \geq m$ imply $k_0>m$.)
Hence, $\: c_k =\.a'_k-\.a_k\leq 0$ for all $k\in\{p+2,\dotsc,m\}$ except possibly one $k$ such that $\: c_k = 1$. Thus, 
$$c \mbox{ is almost concave on } (p,\dotsc,m),$$
and we have $c_1,\dotsc,c_m\geq 0$.
\end{proof}





\subsection{Tournaments}
\label{section:tournaments}

A \emph{tournament} is a directed complete graph --- that is, a simple directed graph such that for each pair of vertices $v_1,v_2$ (with $v_1\neq v_2$) exactly one of the edges $(v_1,v_2)$ or $(v_2,v_1)$ appears. A tournament on $n$ vertices can be represented by a matrix $A\in\{0,1\}^{n\times n}$ such that $A_{i j} + A_{j i}= I(i\neq j)$ (where $I(E)$ is $1$ if $E$ is true, and is $0$ otherwise). Given $a\in\N^n$, we say that $a$ is a \emph{score sequence} if $a$ is the degree sequence of a tournament. In a classic result, Landau \cite{Landau_1953} proved that given $a\in\N^n$ non\italic{decreasing} with $\sum a_i ={n\choose 2}$, we have that $a$ is a score sequence if and only if
$$\sum_{i\leq m} a_i\geq{m\choose 2}$$
for all $m\in\{1,\dotsc,n\}$. We can coerce this criterion into our canonical form as follows. Let $S$ be the set of $n\times n$ binary matrices that correspond to tournaments. Then there is a unique maximal matrix in $S$, namely, the matrix $A$ with ones below the diagonal, and zeros elsewhere (that is, $A_{i j} = I(i>j)$). Thus, letting $b^L$ be the corresponding generalized conjugate, we have $b^L_k = n-k$ and $\sum_{i\leq k} b^L_i ={n\choose 2}-{n-k\choose 2}$. (Note that $b^L$ does not depend on the degree sequence in any way.) If $a$ is non\italic{increasing} (and $\sum a_i = {n \choose 2}$), then Landau's criterion can be rewritten as
\begin{equation}\tag{L$_k$}
\sum_{i\leq k} a_i\leq {n\choose 2}-{n-k\choose 2}
\end{equation}
for all $k\in\{1,\dotsc,n\}$, that is, $a\before b^L$. Simply by observing that $b^L$ is nonincreasing, we obtain the corner reduction (which has been previously noted by Beineke \cite{Beineke_1989}). We also get a stronger reduction, in the form of (5) below.

\begin{theorem}[Reduced Landau] \label{Landau} Let $a\in\N^n$ be nonincreasing such that $\sum a_i ={n\choose 2}$. The following are equivalent:
\begin{enumerate}
\item $a$ is a score sequence
\item $a\before b^L$
\item \textup{(L$_k$)} for all $k\in\{1,\dotsc,n\}$
\item \textup{(L$_k$)} for all $k \in C(a)$
\item \textup{(L$_k$)} for all $k\in\{1,\dotsc,n-1\}$ such that $a_k>n-k>a_{k+1}$
\item $j a'_j +\sum_{i>j}a'_i\leq{n\choose 2}-{n-a'_j\choose 2}$ for all $j\in C(a')$.
\end{enumerate}
\end{theorem}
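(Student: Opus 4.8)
The plan is to run the same cycle of implications used in the earlier reduction theorems, concentrating the genuinely new work in the equivalence (4)$\iff$(5). The routine part comes first: (1)$\iff$(3) is Landau's criterion rewritten for nonincreasing $a$, and (2)$\iff$(3) is immediate from the identity $\sum_{i\le k}b^L_i=\binom{n}{2}-\binom{n-k}{2}$ recorded before the theorem, so that (L$_k$) is literally the statement $\sum_{i\le k}a_i\le\sum_{i\le k}b^L_i$. The implication (3)$\implies$(4) is trivial, and for (3)$\impliedby$(4) I would simply invoke Lemma \ref{main_lemma}: since $b^L_k=n-k$ is nonincreasing, checking the inequalities at the corners $C(a)$ already forces $a\before b^L$. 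Likewise (4)$\iff$(6) is a direct application of Proposition \ref{conjugate} with $b=b^L$, using $\sum_{i\le a'_j}b^L_i=\binom{n}{2}-\binom{n-a'_j}{2}$; because $a'_j\le n$ always holds, the side condition ``$a'_j\le l$'' of that proposition disappears here.

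This leaves the sharper reduction (4)$\iff$(5). The direction (4)$\implies$(5) is trivial, since every index $k$ with $a_k>n-k>a_{k+1}$ satisfies $a_k>a_{k+1}$ and so lies in $C(a)$. The content is (5)$\implies$(3), and here I would argue directly rather than through the almost-concavity lemma. Set $c_k=\sum_{i\le k}(b^L_i-a_i)$, so that $\.c_k=(n-k)-a_k$ and, crucially, $c_0=0$ and $c_n=0$, the latter because $\sum a_i=\binom{n}{2}$. We want $c\ge 0$ throughout. Assume not and let $k^*$ be a global minimizer of $c$; since $c_0=c_n=0>c_{k^*}$, the minimizer is interior, so $\.c_{k^*}\le 0\le\.c_{k^*+1}$, that is $a_{k^*}\ge n-k^*$ and $a_{k^*+1}\le n-k^*-1$.

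The one delicate point --- which I expect to be the main obstacle --- is that the left inequality at $k^*$ may be an equality, $a_{k^*}=n-k^*$, so that $k^*$ itself need not be one of the crossing corners tested by (5). The remedy is to slide the minimum leftward along any flat stretch of $c$. Let $j$ be the largest index $\le k^*$ with $\.c_j\ne 0$; if no such index exists then $a_i=n-i$ for all $i\le k^*$, forcing $c_{k^*}=c_0=0$, a contradiction. For $j<i\le k^*$ we have $\.c_i=0$, hence $a_i=n-i$ and $c_j=c_{k^*}$. In particular $a_{j+1}\le n-j-1<n-j$ (from $a_{j+1}=n-(j+1)$ when $j<k^*$, and from the minimality inequality $\.c_{k^*+1}\ge 0$ when $j=k^*$). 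Since $c_j=c_{k^*}$ is again a global minimum we get $\.c_j\le 0$, and combined with $\.c_j\ne 0$ this yields $a_j>n-j$. Thus $a_j>n-j>a_{j+1}$, so $j$ belongs to the index set of (5); but then (5) forces $c_j\ge 0$, contradicting $c_j=c_{k^*}<0$. Hence $c\ge 0$, which is (3). All the difficulty is in recognizing that a global minimum of $c$ can always be moved along a flat run to a genuine crossing corner that (5) actually tests; the surrounding finite-calculus bookkeeping is then routine.
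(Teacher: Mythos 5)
Your proof is correct and takes essentially the same route as the paper: the cycle $(3)\Rightarrow(4)\Rightarrow(5)\Rightarrow(3)$, with the substance in $(5)\Rightarrow(3)$ handled by examining $c_k=\binom{n}{2}-\binom{n-k}{2}-\sum_{i\le k}a_i$ at a minimum, using $c_0=c_n=0$. Your ``slide the minimizer leftward along a flat run'' step is just a careful spelling-out of the paper's terser claim that it suffices to check $c_k\ge 0$ at the indices with $\dot c_k<0\le\dot c_{k+1}$, i.e.\ $a_k>n-k>a_{k+1}$.
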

\begin{proof} 
(1)$\iff$(3): Landau. (2)$\iff$(3): Trivial. (3)$\implies$(4): Trivial. (4)$\implies$(5): Trivial. (3)$\impliedby$(5): For $k\in\{1,\dotsc,n\}$, let $c_k = {n\choose 2}-{n-k\choose 2}-\sum_{i\leq k} a_i$ and observe that $\.c_k =(n-k)-a_k$ for all $k\in\{1,\dotsc,n\}$. To ensure that $c_k\geq 0$ for all $k$, it is sufficient to check it at the ``local minima'' --- more precisely, it is sufficient that $c_k\geq 0$ for all $k\in\{1,\dotsc,n-1\}$ such that $\.c_k<0$ and $\.c_{k+1}\geq 0$, or equivalently, $a_k>n-k>a_{k+1}$.
(4)$\iff$(6): Proposition \ref{conjugate}.
\end{proof}

\begin{corollary} Let $a\in\N^n$ such that $\sum a_i ={n\choose 2}$. Then $a$ is a score sequence if $\{a_1,\dotsc,a_n\}$ contains every integer in $\{\min a_i,\dotsc,\max a_i\}$ except possibly one of them.
\end{corollary}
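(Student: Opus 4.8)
The plan is to reduce, via Theorem \ref{Landau}, to showing that all of Landau's inequalities (L$_k$) hold, and to establish this by an almost-concavity argument in the spirit of Corollary \ref{imbalance_corollary}. Since being a score sequence, the set $\{a_1,\dotsc,a_n\}$, and the quantities $\min a_i$ and $\max a_i$ are all invariant under permuting the entries of $a$, I may assume at the outset that $a$ is nonincreasing, so that Theorem \ref{Landau} applies.

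Following the proof of Theorem \ref{Landau}, I set $c_k ={n\choose 2}-{n-k\choose 2}-\sum_{i\leq k} a_i$, now for $k\in\{0,1,\dotsc,n\}$. The boundary values come for free: $c_0 = 0$, and using $\sum a_i ={n\choose 2}$ also $c_n = 0$. A direct computation gives $\.c_k =(n-k)-a_k$, and hence $\:c_k =(a_{k-1}-a_k)-1$ for $k\in\{2,\dotsc,n\}$. The task is therefore to control the gaps $a_{k-1}-a_k$.

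The key step is to translate the hypothesis into a statement about these gaps. Writing the distinct values of $a$ in decreasing order, the number of integers of $\{\min a_i,\dotsc,\max a_i\}$ omitted by $a$ equals the sum of $(a_{k-1}-a_k-1)$ over the indices $k$ at which $a$ jumps. By hypothesis this sum is at most $1$, which forces every gap $a_{k-1}-a_k$ to be $0$ or $1$, except for at most one gap equal to $2$ (no gap can be $\geq 3$, as that alone would omit at least two integers). Consequently $\:c_k\leq 0$ for all $k\in\{2,\dotsc,n\}$ with at most one exception, where $\:c_k = 1$.

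This is exactly the hypothesis of Corollary \ref{concave_corollary}, applied (after the obvious reindexing) to the length-$(n+1)$ sequence $(c_0,\dotsc,c_n)$: the endpoints $c_0$ and $c_n$ are nonnegative, and the second differences are $\leq 0$ with a single possible value of $1$. The corollary then yields $c_k\geq 0$ for all $k$, i.e.\ (L$_k$) holds for every $k$, so Theorem \ref{Landau} gives that $a$ is a score sequence. I expect the only delicate point to be the bookkeeping of the previous paragraph --- verifying that ``at most one omitted integer'' is equivalent to ``at most one gap equal to $2$ and none larger'' --- together with lining up the indices so that Corollary \ref{concave_corollary} applies cleanly to the extended sequence, in particular using the free endpoint $c_0$ rather than $c_1$, which need not be nonnegative.
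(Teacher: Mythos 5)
Your proof is correct and follows essentially the same route as the paper: rearrange $a$ to be nonincreasing, observe that the gap hypothesis makes the sequence $c$ from the proof of Theorem \ref{Landau} satisfy $\:c_k\leq 0$ for all $k$ with at most one exception where $\:c_k=1$, and apply Corollary \ref{concave_corollary}. Your one refinement --- prepending $c_0=0$ so that both endpoints of the sequence fed to Corollary \ref{concave_corollary} are nonnegative for free --- is a nice touch, since the paper's version implicitly needs $c_1=(n-1)-a_1\geq 0$, i.e.\ the true but unstated fact that the hypotheses force $\max a_i\leq n-1$.
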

\begin{proof} Rearrange $a$ to be nonincreasing. Consider the sequence $c$ defined in the proof of Theorem \ref{Landau}. By our assumptions, $\:c_k\leq 0$ for all $k\in\{2,\dotsc,n\}$, or there exists $l>1$ such that $\:c_l\leq 1$ and $\:c_k\leq 0$ for all $k>1$, $k\neq l$.  Apply Corollary \ref{concave_corollary}.
\end{proof}

\subsection{A negative result}
\label{negative_example}

In light of the generality with which the preceding reductions apply, it is natural to wonder if there is some more fundamental principle underlying all these results.  A common thread running through all these examples is that the degree sequence criterion takes the form $a\before b^*$ for some generalized conjugate $b^*$. Chen \cite{Chen_1992} has proven a very general criterion of this form, applicable to many classes of graphs. This raises the question of whether it is possible to extend these results to every class satisfying Chen's conditions, or even more broadly: given a class of graphs having a degree sequence criterion of the form $a\before b^*$ (for a generalized conjugate $b^*$), does the corner reduction always apply? (In other words, is it always sufficient to check the inequalities at the indices $k$ such that $a_k>a_{k+1}$ when $a$ is nonincreasing?) In this subsection, we answer this question in the negative by exhibiting a simple counterexample.

Let
$$
C = \left(\begin{matrix}  
                    0 & 0 & 1 & 1 \\
                    0 & 0 & 1 & 1 \\
                    1 & 1 & 1 & 1 \\
                    1 & 1 & 1 & 1 \end{matrix}\right).
$$
Given $b\in\N^4$ such that $b\leq(2,2,4,4)$, let $S\subset\{0,1\}^{4\times 4}$ be the subset of $4\times 4$ binary matrices $A$ having row sums $b$ and satisfying $A\leq C$ (that is, the upper left $2\times 2$ block is forced to be zero.) Denote by $b^C$ the corresponding generalized conjugate. The set $S$ corresponds to a very simple class of bipartite graphs, and it can be shown that if $a\in\N^4$ is nonincreasing with $\sum a_i =\sum b_i$, then there exists a binary matrix $A\leq C$ with column sums $a$ and row sums $b$ if and only if $a\before b^C$. (Perhaps the easiest way to show this is by an argument similar to Krause's proof \cite{Krause_1996} of the Gale-Ryser criterion.) However, it is insufficient that the inequalities hold only at the indices $k$ such that $a_k>a_{k +1}$, as the following example illustrates. Choosing $b =(2,1,1,1)$, the maximal matrix is
$$ A^*= \left(\begin{matrix}
                    0 & 0 & 1 & 1 \\
                    0 & 0 & 1 & 0 \\
                    1 & 0 & 0 & 0 \\
                    1 & 0 & 0 & 0 \end{matrix}\right),
$$
so $b^C =(2,0,2,1)$. Now, choose $a =(2,1,1,1)$ also, noting that $a$ is nonincreasing and $\sum a_i =\sum b_i$. Clearly there is no matrix $A\leq C$ with these row and column sums, and as expected, $a \ntrianglelefteq b^C$. However, the inequalities hold at the corners of $a$ (since $a_1\leq b^C_1$ and $\sum_{i = 1}^4 a_i\leq \sum_{i = 1}^4 b^C_i$). Hence, the corner reduction does not apply. 

This shows that the corner reduction does not always apply for a class of graphs with a degree sequence criterion of the form $a\before b^*$. One might hope that it would still hold for those classes satisfying Chen's conditions, but the same example shows that this is not the case.
Rather than give a full exposition of Chen's criterion, we simply remark that this example satisfies his conditions (that is, it is what he refers to as a monotone class satisfying his ``main condition'').

\section{Concluding remarks}

We have proven reduced degree sequence criteria for several diverse classes of graphs. By introducing the use of finite calculus, coupled with generalized conjugates, we have presented a unified approach to these problems. We have applied this framework to obtain many new results, and to offer more interpretable proofs for those results which were previously known.

\section*{Acknowledgments}
The author was supported by a NDSEG fellowship.


\bibliography{references}
\bibliographystyle{model1b-num-names}

\end{document}